\documentclass[12pt,a4paper,twoside,final,notitlepage, reqno]{article}

\usepackage[english]{babel}
\usepackage[latin1]{inputenc}
\usepackage[a4paper,left=3.5cm,right=3.5cm]{geometry}
\usepackage{amssymb}
\usepackage{amsthm} 
\usepackage{amsmath}
\usepackage{mathtools}
\usepackage{amscd}
\usepackage{geometry}
\usepackage{graphics,graphicx}
\usepackage{epstopdf}
\usepackage[usenames, dvipsnames]{color}
\usepackage[textsize=small]{todonotes}
\usepackage{booktabs}
\usepackage{subfigure} 
\usepackage{siunitx}

\setlength{\textheight}{23cm}  
\setlength{\footskip}{2cm}
\setlength{\headheight}{20pt}




\theoremstyle{definition}
\newtheorem{theorem}{Theorem}
\newtheorem{lemma}{Lemma}

\theoremstyle{definition}

\theoremstyle{definition}


%
\providecommand{\keywords}[1]  {\textbf{Keywords:} #1}

%




\let\div\undefined
\DeclareMathOperator{\div}{div}

\usepackage{esdiff}
\renewcommand{\d}[1]{\, \mathrm{d} #1}
\def \dx {\d{x}}
\def \dy {\d{y}}

\newcommand{\N}{\mathbb{N}}
\newcommand{\Z}{\mathbb{Z}}

\newcommand{\R}{\mathbb{R}}
\newcommand{\C}{\mathbb{C}}

\renewcommand{\phi}{\varphi}


%


%
%
%
%

\newcommand  {\ldo} {{\rm L}^2(\Omega)}
\newcommand  {\hun} {{\rm H}^1(\Omega)}
\newcommand  {\hdiv}{{\rm H}_{\div}(\Omega)}

\newcommand{\n}{{n}}
\newcommand{\np}{{\n ,p}}
\newcommand{\nl}{{\n,\lambda}}

\newcommand{\dr}{\diff{}{r}}

\def \Pe {{\rm Pe}}

\usepackage{authblk}

\title{
  \bf{\Large{
       Analytical properties of Graetz modes 
        \\[5pt]
        in parallel and concentric configurations
    }}
}

\author[1]{Charles Pierre \thanks{charles.pierre@univ-pau.fr}}
\author[2]{Franck. Plourabou\'e \thanks{fplourab@imft.fr}}

\affil[1]{
  Laboratoire  de Math\'ematiques et de leurs Applications,
  UMR CNRS 5142, \protect \\
  Universit\'e de Pau et des Pays de l'Adour, France.
}
\affil[2]{
  Institut de Mécanique des Fluides de Toulouse (IMFT) Universit\'e de Toulouse, CNRS, INPT, UPS,
All\'ee du Professeur Camille Soula, 31400 Toulouse, France
}

\usepackage{fancyhdr}

\fancyhf{}
\pagestyle{fancy}  

\fancyhead[EL]{Charles Pierre and Franck Plourabou\'e} 

\fancyhead[OR]{Analyticity of Generalized Graetz
} 

\fancyfoot[C]{\oldstylenums{\thepage}}


\begin{document} 

\date{September, 2019}

\maketitle

\begin{abstract}
  The generalized Graetz  problem refers to stationary convection-diffusion  in uni-directional flows.
  In this contribution we demonstrate the analyticity of generalized Graetz solutions associated
 with  layered domains: either cylindrical (possibly concentric) or parallel. 
  Such configurations are considered as prototypes for heat exchanger 
  devices and appear in numerous applications involving heat or mass transfer. 
  The established framework of   Graetz modes allows to recast  the 3D resolution of the heat transfer
  into  a 2D or even 1D spectral problem. The associated eigefunctions (called Graetz modes) are obtained with the help of a sequence of closure functions  that are recursively computed. The spectrum is given by the zeros of an explicit analytical serie,  the truncation of which allows to approximate the eigenvalues by solving a polynomial equation.      
  Graetz mode computation is henceforth made explicit and can be performed using  standard softwares of formal calculus. 
  It permits a direct and mesh-less  computation of the resulting solutions for a broad range of configurations.  Some solutions are illustrated to showcase the interest of   mesh-less
  analytical derivation of  the Graetz solutions, useful to validate other numerical approaches.
\end{abstract}
\vspace{20pt}
\noindent
\keywords{ 
  Heat and mass transfer, convection-diffusion, reduced problem, separation of variables, analytical solutions.
}
\\ \\   
\section*{Introduction}
Parallel convective   heat exchangers are relevant  in many applicative contexts 
such as  heating/cooling  systems \cite{shah_Dusan_Sekulic_book}, haemodialysis \cite{gostoli_80}, 
as well as  convective heat exchangers   \cite{kragh_07}.  
A number of works devoted to parallel convective heat exchangers in simple two
dimensional configurations \cite{nunge_65,nunge_66,HO_YEH_02,HO_YEH_05,YEH_09,tu_09,YEH_11,vera_10} can be found
to cite only a few, whilst  many others can be found in a recent  review   \cite{Dorfman_09}. 
\\ \\
As quoted in  \cite{Dorfman_09} conjugate heat transfer  are   mixed parabolic/hyperbolic 
problems which makes them numerically challenging.
In many applications  the ratio between the solid and fluid thermal
conductances is high (larger than one thousand in many cases).
The convection is dominating, so
that the ratio of convection to diffusion effects provided by the so-called P\'eclet number
is very high  (e.g. larger than $10^5$ in \cite{qu_ijhmt_02a,qu_ijhmt_02b}). When dealing
with such highly hyperbolic situations, numerical convergence might be an issue.
The   increase in computer power  has permitted and popularized the use of direct numerical simulations
to  predict heat exchanger  
performances \cite{qu_ijhmt_02a,qu_ijhmt_02b,Weisberg_ijhmt_92,Fedorov_ijhmt_92,hong_ijhmt_11}.
The derivation of analytical mesh-less reference solutions 
allows to
evaluate the accuracy and the quality of the discrete solutions,
as done in   \cite{pierre-plouraboue-SIAP-2009,FGPP-2012,BPP-2014,PBGP-2014,DICHAMP2017154}  in a finite-element framework.
In most cases, it is interesting to validate the numerical solution in simple configurations as well
as being able to test the solution quality for extreme values of the parameters, 
when rapid variations of the temperature  might occur in localized regions.
However, few analytic solutions are known, apart from very simplified cases.
  Namely, such analytic solutions can be obtained
  for  axi-symmetric configurations, 
  when the longitudinal diffusion has been neglected whilst assuming
  a parabolic velocity profile,  as originally studied by  Graetz \cite{GRAETZ}. In this very special case,
   the Graetz problem  maps to  a Sturm-Liouville ODE class, and the resulting
  analytic solutions can be formulated from  hypergeometric 
  functions, see \cite{deenbook} or for example \cite{vera_10}.
\\ \\
In this contribution we introduce  analytical generalized Graetz modes:
including longitudinal diffusion,
for any regular velocity profile, 
and  for general boundary conditions.
The derivation of the generalized Graetz modes
follows an iterative process that 
can be performed
using a standard formal calculus software.
Then, section  \ref{sec:physical-prob} sets  notations (mainly for the  cylindrical case) and provides
the physical context as well as the constitutive equations under study.
Section  \ref{math_back} gives the necessary mathematical background for the subject, with
an emphasis on most recent results useful for the presented analysis.
Section \ref{eigenmode_decomposition} shows that discrete mode decomposition also holds
for non-axi-symmetric configuration. Section \ref{eigenmodes_expansion} gives the central result
of this contribution regarding the analyticity of the  generalized Graetz modes. Finally  section 
\ref{num_exemple} illustrate specific  applications  obtained with the method with explicit analytical computations.

\section{Setting the problem}
\label{setting}
\subsection{Physical problem}  
\label{sec:physical-prob}
We study stationary convection-diffusion in a circular duct made of several concentric layers (fluid or solid).
The domain is set to $\Omega \times (a,b)$ with $(a,b)\subset \R$ an interval and $\Omega$ the disk with centre the origin and radius $R$. 
The longitudinal coordinate is denoted by $z$ and cylindrical coordinates $(r,\phi)$ are used in the transverse plane.
Then $\Omega$ is split into $m$ different compartments $\Omega_j$, $j=1\dots m$, either fluid or solid and centered on the origin: $\Omega_1$ is the disk of radius $r_1$ whereas $\Omega_j$, $j\ge 2$, is the annulus with inner and outer radius $r_{j-1}$ and $r_j$ for a given sequence  $0<r_1 <\dots <r_m=R$.
Two such configurations  are depicted on 
Figure \ref{fig:tube}.
\\
The physical framework is set as follows:
\begin{enumerate}
\item Velocity: $\mathbf{v}(r,\phi,z) = v(r)~\mathbf{e}_z$ with $\mathbf{e}_z$ the unit vector along the $z$ direction. We denote $\mathbf{v}_j = \mathbf{v}_{\vert \Omega_j} = v_j(r)~\mathbf{e}_z$ the restriction of the velocity to compartment $\Omega_j$. In case this compartment is solid we have $v_j(r)=0$.
  \\
  We make the mathematical assumption that each $v_j(r)$ is  analytical, though $v(r)$ is allowed to be discontinuous at each interface.
\item Conductivity: $k(r,\phi,z)=k(r)$ and moreover $k_{|\Omega_j}=k_j>0$ is a constant.
\end{enumerate}
\begin{figure}[htbp]
  \centering
\begin{picture}(0,0)%
\includegraphics{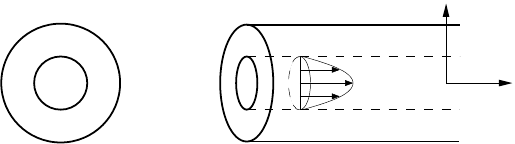}%
\end{picture}%
\setlength{\unitlength}{1865sp}%
\begingroup\makeatletter\ifx\SetFigFont\undefined%
\gdef\SetFigFont#1#2#3#4#5{%
  \reset@font\fontsize{#1}{#2pt}%
  \fontfamily{#3}\fontseries{#4}\fontshape{#5}%
  \selectfont}%
\fi\endgroup%
\begin{picture}(8701,2431)(772,-4689)
\put(1651,-3091){\makebox(0,0)[lb]{\smash{{\SetFigFont{11}{13.2}{\familydefault}{\mddefault}{\updefault}{\color[rgb]{0,0,0}$\Omega_2$}%
}}}}
\put(1651,-3751){\makebox(0,0)[lb]{\smash{{\SetFigFont{11}{13.2}{\familydefault}{\mddefault}{\updefault}{\color[rgb]{0,0,0}$\Omega_1$}%
}}}}
\put(6856,-3826){\makebox(0,0)[lb]{\smash{{\SetFigFont{11}{13.2}{\familydefault}{\mddefault}{\updefault}{\color[rgb]{0,0,0}$v_1>0$}%
}}}}
\put(6571,-3061){\makebox(0,0)[lb]{\smash{{\SetFigFont{11}{13.2}{\familydefault}{\mddefault}{\updefault}{\color[rgb]{0,0,0}$v_2=0$}%
}}}}
\put(9356,-3536){\makebox(0,0)[lb]{\smash{{\SetFigFont{11}{13.2}{\familydefault}{\mddefault}{\updefault}{\color[rgb]{0,0,0}$z$}%
}}}}
\put(8411,-2381){\makebox(0,0)[lb]{\smash{{\SetFigFont{11}{13.2}{\familydefault}{\mddefault}{\updefault}{\color[rgb]{0,0,0}$r$}%
}}}}
\end{picture}%
  \\[15pt]
\begin{picture}(0,0)%
\includegraphics{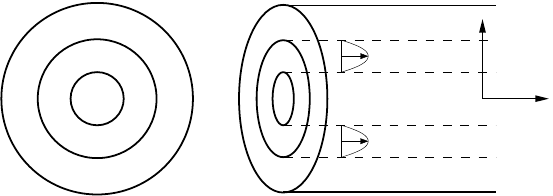}%
\end{picture}%
\setlength{\unitlength}{1865sp}%
\begingroup\makeatletter\ifx\SetFigFont\undefined%
\gdef\SetFigFont#1#2#3#4#5{%
  \reset@font\fontsize{#1}{#2pt}%
  \fontfamily{#3}\fontseries{#4}\fontshape{#5}%
  \selectfont}%
\fi\endgroup%
\begin{picture}(9317,3288)(156,-5305)
\put(1651,-3091){\makebox(0,0)[lb]{\smash{{\SetFigFont{11}{13.2}{\familydefault}{\mddefault}{\updefault}{\color[rgb]{0,0,0}$\Omega_2$}%
}}}}
\put(1651,-3751){\makebox(0,0)[lb]{\smash{{\SetFigFont{11}{13.2}{\familydefault}{\mddefault}{\updefault}{\color[rgb]{0,0,0}$\Omega_1$}%
}}}}
\put(1621,-2491){\makebox(0,0)[lb]{\smash{{\SetFigFont{11}{13.2}{\familydefault}{\mddefault}{\updefault}{\color[rgb]{0,0,0}$\Omega_3$}%
}}}}
\put(5851,-3841){\makebox(0,0)[lb]{\smash{{\SetFigFont{11}{13.2}{\familydefault}{\mddefault}{\updefault}{\color[rgb]{0,0,0}$v_1=0$}%
}}}}
\put(5851,-2581){\makebox(0,0)[lb]{\smash{{\SetFigFont{11}{13.2}{\familydefault}{\mddefault}{\updefault}{\color[rgb]{0,0,0}$v_3=0$}%
}}}}
\put(6571,-3121){\makebox(0,0)[lb]{\smash{{\SetFigFont{11}{13.2}{\familydefault}{\mddefault}{\updefault}{\color[rgb]{0,0,0}$v_2>0$}%
}}}}
\put(6571,-4561){\makebox(0,0)[lb]{\smash{{\SetFigFont{11}{13.2}{\familydefault}{\mddefault}{\updefault}{\color[rgb]{0,0,0}$v_2>0$}%
}}}}
\put(9386,-3546){\makebox(0,0)[lb]{\smash{{\SetFigFont{11}{13.2}{\familydefault}{\mddefault}{\updefault}{\color[rgb]{0,0,0}$z$}%
}}}}
\put(8461,-2421){\makebox(0,0)[lb]{\smash{{\SetFigFont{11}{13.2}{\familydefault}{\mddefault}{\updefault}{\color[rgb]{0,0,0}$r$}%
}}}}
\end{picture}%
  \\[15pt]
  \caption{Two possible configurations. Above: fluid flowing inside a circular tube with a solid wall. Below: fluid flowing inside an annulus between a solid core and a solid external wall. 
  }
  \label{fig:tube} 
\end{figure}
The general equation for stationary heat convection-diffusion  reads
\begin{displaymath}
  {\rm div}_{(r,\phi,z)}
  \big( \mathbf{v}T - k\nabla_{(r,\phi,z)}  T \big) = 0.
\end{displaymath}
With the assumptions we have made, it simplifies to
\begin{equation}
  \label{eq:1}
  {\rm div}(k\nabla  T) + k \partial_z^2 T= v \partial_z T
  \qquad \text{in} \qquad  \Omega\times (a,b),
\end{equation}
where we denoted by ${\rm div} = {\rm div}_{(r,\phi)}$ and $\nabla =\nabla_{(r,\phi)} $ the gradient and divergence operators restricted to the transverse plane. 
The following boundary conditions, either of Dirichlet or Neumann type, are considered
\begin{equation}
  \label{eq:bc}
  T
  = g(z) \quad \text{or} \quad 
  k\nabla T 
  = g(z)
  \quad \text{on} \quad \partial\Omega\times (a,b).
\end{equation}

\subsection{Mathematical background}
\label{math_back}
\paragraph*{Problem reformulation}
Adding a supplementary vector  unknown $\mathbf{p}:~\Omega\to\R^2 $,
problem \eqref{eq:1}-\eqref{eq:bc} has been reformulated in \cite{pierre-plouraboue-SIAP-2009,FGPP-2012,BPP-2014,PBGP-2014}
into a  system of rwo coupled PDEs of first order:
\begin{displaymath}
  \partial_z \Psi = A \Psi\quad \text{with}~~~
  \Psi=(T,\mathbf{p})~,\quad 
  A=
  \left(
    \begin{array}{cc}
      v k^{-1} & -k^{-1}{\rm div} (\cdot)\\k\nabla \cdot & 0
    \end{array}
  \right)
  ,
\end{displaymath}
on the space $\mathcal{H}=  \ldo\times [\ldo]^2 $ and involving the differential operator $A:~D(A)\subset \mathcal{H} \rightarrow \mathcal{H}$.
The definition of the domain $D(A)$ of the operator $A$ depends on the chosen Dirichlet or Neumann boundary condition.
\\
For simplicity we briefly recall
the properties of 
operator $A$ in the Dirichlet case, as presented in \cite{pierre-plouraboue-SIAP-2009,FGPP-2012}. These properties have been extended to the Neumann case in \cite{PBGP-2014} 
and to the Robin case in \cite{DFG-Robin-2018}.
For a Dirichlet boundary condition, we set  $D(A)={\rm H}^1_0(\Omega)\times \hdiv$.
Then  $A$ is self-adjoint with compact resolvent. 
Apart from the kernel space 
$K:={\rm ker} A=\{(0,\mathbf{p}), ~ \mathbf{p}\in\hdiv,~ \div \mathbf{p =0}\}$
the spectrum of $A$ is composed  of a set $\Lambda$ of eigenvalues of finite multiplicity. It has been shown in \cite{pierre-plouraboue-SIAP-2009} that $\Lambda$ decomposes into a double sequence of eigenvalues $\lambda_i$,
\begin{equation}
  \label{eq:L}
  -\infty \leftarrow \le \lambda_i \le\dots\le\lambda_1 < 0 <\lambda_{-1} \le \dots \le \lambda_{-i} \rightarrow +\infty.
\end{equation}
We call \textbf{upstream eigenvalues} the positive eigenvalues  $\{\lambda_i,~ i<0\}$ and \textbf{downstream eigenvalues} the negative ones $\{\lambda_i,~i>0\}$.
The associated eigenfunctions $(\Psi_i)_{i\in\Z^\star}$ form an  orthogonal (Hilbert) basis of $K^\perp$. 

\paragraph*{Eigenmodes}
Let us write   $\Psi_i=(\Theta_i,\mathbf{p}_i)$ the eigenfunctions.
Their vector component  satisfies $\mathbf{p}_i=k \nabla \Theta_i/\lambda_i$.
It is important to understand  that $\Theta_i:~\Omega\mapsto \R$ only is the scalar component of the associated eigenfunction $\Psi_i$. 
As a result the $(\Theta_i)_{i\in\Z^\star}$  are not eigenfunctions themselves, they are neither orthogonal  nor form a basis of $\ldo$. 
To clarify this distinction we refer to $\Theta_i$ as an \textbf{eigenmode} associated with $\lambda_i$.
\\
Eigenmodes can be directly defined through a generalized eigenvalue problem.
A function $\Theta:\Omega\rightarrow  \R$ is an eigenmode if   $\Theta\in \hun$, $k\nabla \Theta\in \hdiv$ and their exists a scalar $\lambda$ so that
\begin{equation}
  \label{eq:eig1}
  \div (k(r)\nabla  \Theta) 
  +\lambda^2 k(r) \Theta 
  = \lambda v(r) \Theta\quad\text{on}~~~ \Omega,
\end{equation}
with
$\Theta=0$
or 
$\nabla \Theta \cdot \mathbf{n}=0$ on $\partial\Omega$
depending on the considered Dirichlet or Neumann boundary condition.
In that situation, $\lambda$ is an eigenvalue of $A$ associated with the eigenfunction $\Psi_i=(\Theta, k\nabla \Theta/\lambda)$.
As a consequence, the eigenmodes always are real functions since the operator $A$ is symmetric.
\\ 
The upstream and downstream eigenmodes have the following important property
(proved in \cite{FGPP-2012}):
\begin{itemize}
\item The upstream eigenmodes $\{\Theta_i,~i<0\}$ form a (Hilbert) basis of $\ldo$.
\item The downstream eigenmodes $\{\Theta_i,~i>0\}$ also form a basis of $\ldo$.
\end{itemize}

\paragraph*{Problem resolution}
The problem \eqref{eq:1}-\eqref{eq:bc} can be solved by separation of variables.
General solutions for non-homogeneous boundary conditions 
of Dirichlet,  Neumann or Robin type   
have been derived in 
\cite{pierre-plouraboue-SIAP-2009,FGPP-2012,PBGP-2014,BPP-2014,DFG-Robin-2018}.
Such solutions are detailed in section \ref{num_exemple}.
We simply recall their formulation for a homogeneous Dirichlet boundary condition:
\begin{displaymath}
  T(r,\phi,z) = \sum_{i\in \Z^\star} c_i(z) \Theta_i(r,\phi)\, e^{\lambda_i z}.
\end{displaymath}
The functions $c_i(z)$ 
are determined  with the help of the eigenmodes, 
of the boundary condition $g(z)$ and of the inlet/outlet conditions. 
As an illustration, we precise that derivation in two cases.
In the case of a homogeneous boundary condition  $g(z)$=0 in \eqref{eq:bc},
then $c_i(z)=c_i \in \R$ are constant scalars.
On a semi-infinite domain $\Omega\times (0,+\infty )$, the upstream coefficients
are zero, $c_i=0$ for $i<0$, and
\begin{displaymath}
  T(r,\phi,z) = \sum_{i\in \Z^+} c_i\Theta_i(r,\phi)\, e^{\lambda_i z}.
\end{displaymath}
The coefficients $c_i$ for $i>0$ are given by the inlet condition $T^i = T_{|z=0}$
\begin{displaymath}
  T^i = \sum_{i\in \Z^+} c_i\Theta_i.
\end{displaymath}
If the domain is finite, equal to $\Omega\times (0,L)$, then the upstream coefficients are no longer equal to zero, the upstream and downstream coefficients $c_i$ satisfy
\begin{displaymath}
  T^i = \sum_{i\in \Z^+} c_i\Theta_i + 
  \sum_{i\in \Z^-} c_i\Theta_i e^{-\lambda_i L},\qquad 
  T^o = \sum_{i\in \Z^+} c_i\Theta_i e^{\lambda_i L}+ 
  \sum_{i\in \Z^-} c_i\Theta_i,
\end{displaymath}
where $T^o = T_{|z=L}$ is the outlet condition.

\section{Analyticity of  the generalized Graetz modes}
\subsection{Series decomposition}
\label{eigenmode_decomposition}
To take advantage of the azimuthal symmetry of the physical problem we perform the Fourier decomposition of the eigenmodes.
Their Fourier series expansion is composed by  terms of the form 
$T(r)\cos(n\phi)$ or $T(r)\sin(n\phi)$. We prove here that we have a finite number of such terms and characterize $T(r)$.
Let us introduce the operator $\Delta_n$ 
$$  
\Delta_n f
= \dfrac{1}{r}\dr  ( r\dr  f ) - \dfrac{n^2}{r^2} f.
$$
Consider  $\Theta$ an eigenmode associated with $\lambda\in\Lambda$ and assume that $\Theta(r,\phi)= T(r) \cos(n\phi)$ or $\Theta(r,\phi)= T(r) \sin(n\phi)$. 
Then $T$ is a solution of the following ODEs
\begin{equation}
  \label{eq:bf-1}
  \lambda^2 k_j T + k_j \Delta_n T = \lambda v_j T, 
  \quad  \text{on} \quad  (r_{j-1},r_j),\quad j=1\dots m,
\end{equation}
that are coupled with the transmission conditions
\begin{equation}
  \label{eq:bf-2}
  T(r_j^-) = T(r_j^+)~,\quad 
  k_j \dr  T(r_j^-)
  = k_{j+1} \dr  T(r_j^+),\quad j=1\dots m-1.
\end{equation}
\begin{lemma}
  \label{lem:bf}
  For all $\lambda\in \C$ and all $n\in\N$ 
  there exists a unique  function 
  $T_\nl(r):~(0,R)\rightarrow \R$ that satisfies
  \eqref{eq:bf-1}-\eqref{eq:bf-2} together with 
  the normalisation condition
  \begin{equation}
    \label{eq:bf-3}
    T_\nl(r) \sim r^n \quad  \text{as} \quad  r \rightarrow 0^+.    
  \end{equation}  
  An eigenmode $\Theta$ associated with the eigenvalue $\lambda$ decomposes as a finite sum of terms of the form  $T_\nl(r) \cos(n\phi)$ or $T_\nl(r) \sin(n\phi)$.
  \\
  The eigenvalue set $\Lambda$  decomposes in the Dirichlet case as
\begin{equation}
  \label{eq:Lambda_n-dir}
  \Lambda = \bigcup_{n\in \N} \Lambda_n,
  \quad \Lambda_n = \left\{
    \lambda\in\C ,\quad  T_\nl(R)=0
  \right\},
\end{equation}
and in the Neumann case as
\begin{equation}
  \label{eq:Lambda_n-neu}
  \Lambda = \bigcup_{n\in \N} \Lambda_n,
  \quad \Lambda_n =\left\{
    \lambda\in\C ,\quad  \dr T_\nl(R)=0
  \right\}
\end{equation}
Finally, if $\lambda\in\Lambda_n$, then the associated eigenmodes are $T_\nl(r)\cos(n\phi)$ and $T_\nl(r)\sin(n\phi)$. 
\end{lemma}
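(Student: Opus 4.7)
The plan is to first construct the radial profile $T_\nl$ by stitching ODE solutions across the interfaces $r_1,\dots,r_{m-1}$, and then to reduce an arbitrary eigenmode to a finite sum of such profiles multiplied by $\cos(n\phi)$ or $\sin(n\phi)$ by exploiting the rotational symmetry of the problem. For the first step, on the innermost interval $(0,r_1)$ equation \eqref{eq:bf-1} is a second-order linear ODE with a regular singular point at $r=0$ whose indicial equation reads $s^2=n^2$, with roots $\pm n$. Since $v_1$ is analytic, Frobenius theory produces a one-parameter family of solutions of the form $r^n\times(\text{analytic function of }r)$, uniquely fixed by the normalization \eqref{eq:bf-3}; the companion solution behaving like $r^{-n}$ (or $\log r$ when $n=0$) is discarded. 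On each subsequent annulus $(r_{j-1},r_j)$, $j\ge 2$, the ODE is regular, and the transmission conditions \eqref{eq:bf-2} prescribe both $T(r_{j-1}^+)$ and $\dr T(r_{j-1}^+)$ from the solution already built on $(r_{j-2},r_{j-1})$; Cauchy--Lipschitz then provides a unique analytic extension up to $r_j$. Iterating for $j=2,\dots,m$ defines $T_\nl$ uniquely on $(0,R)$.

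For the decomposition of an arbitrary eigenmode $\Theta$ associated with $\lambda$, expand it in $L^2$ as $\Theta(r,\phi)=\sum_{n\ge 0}(a_n(r)\cos(n\phi)+b_n(r)\sin(n\phi))$. Because $k$ and $v$ depend only on $r$ and the interfaces are concentric circles, the rotation $R_\alpha:(r,\phi)\mapsto(r,\phi+\alpha)$ preserves $D(A)$ and commutes with $A$, so each translate $R_\alpha\Theta$ lies in the same eigenspace. Averaging $R_\alpha\Theta$ against $\cos(n\alpha)$ and $\sin(n\alpha)$ shows that each angular spectral projection $a_n(r)\cos(n\phi)+b_n(r)\sin(n\phi)$ also lies in the eigenspace. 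Inserting such a term into \eqref{eq:eig1} and separating the two orthogonal angular harmonics yields \eqref{eq:bf-1} for both $a_n$ and $b_n$, together with the transmission conditions \eqref{eq:bf-2} inherited from $\Theta\in\hun$ and $k\nabla\Theta\in\hdiv$. The $H^1$-regularity of $\Theta$ at the origin forces $a_n,b_n=O(r^n)$ (it rules out $r^{-n}$ for $n\ge 1$ and $\log r$ for $n=0$), so by the first step $a_n=c_n T_\nl$ and $b_n=d_n T_\nl$. Because $A$ has compact resolvent, the eigenspace of $\lambda$ is finite-dimensional, which forces all but finitely many pairs $(c_n,d_n)$ to vanish and yields the claimed finite sum.

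For the characterization of $\Lambda_n$, the boundary condition on $\partial\Omega$ reduces, by orthogonality of the trigonometric system, to a scalar condition at $r=R$. In the Dirichlet case $\Theta_{|\partial\Omega}=0$ gives $c_n T_\nl(R)=d_n T_\nl(R)=0$, so a Fourier index $n$ contributes iff $T_\nl(R)=0$, i.e.\ $\lambda\in\Lambda_n$; conversely, whenever $T_\nl(R)=0$, both $T_\nl(r)\cos(n\phi)$ and $T_\nl(r)\sin(n\phi)$ satisfy \eqref{eq:eig1} and the homogeneous Dirichlet condition, so are genuine eigenmodes. The Neumann case is identical with $\dr T_\nl(R)$ in place of $T_\nl(R)$. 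The main technical obstacle is the careful Frobenius analysis at the singular point $r=0$ --- in particular the resonant case $n=0$ where the two indicial roots coincide and the companion solution picks up a logarithm --- together with the rigorous justification that rotations commute with $A$ on $D(A)$, which is where the assumption that all interfaces are concentric circles and all $v_j$ are radial is essential.
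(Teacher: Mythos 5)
Your proposal is correct, and the first half --- the construction of $T_{n,\lambda}$ by a Frobenius analysis at the regular singular point $r=0$ (indicial roots $\pm n$, discarding the $r^{-n}$ or $\log r$ companion) followed by an induction over the annuli using the transmission conditions as Cauchy data --- is essentially identical to the paper's argument. Where you genuinely diverge is in the reduction of a general eigenmode to its angular harmonics. The paper expands $\Theta=\sum_n\theta_n(r)e^{-in\phi}$, invokes interior elliptic regularity on each $\overline{\Omega_j}$ to get $\Theta\in C^\infty(\overline{\Omega_j})$, and then differentiates the Fourier series term by term to show each $\theta_n$ solves the radial ODE with the transmission conditions. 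You instead observe that rotations commute with $A$ and average $R_\alpha\Theta$ against $\cos(n\alpha)$ and $\sin(n\alpha)$, so that each angular projection already lies in the (closed, finite-dimensional) eigenspace of $\lambda$; the separation into the two radial ODEs then follows from the linear independence of $\cos(n\phi)$ and $\sin(n\phi)$. Your route is arguably cleaner conceptually --- it makes the finiteness of the sum immediate from finite multiplicity without first manipulating an infinite series --- but note that you still need the paper's elliptic-regularity step implicitly: to pass from ``the projection is an eigenfunction in $D(A)$'' to ``its radial part satisfies the pointwise ODE \eqref{eq:bf-1} and the pointwise transmission conditions \eqref{eq:bf-2}'', you must know the projection is classically regular on each $\overline{\Omega_j}$ and that its trace and conormal derivative match across interfaces; a sentence acknowledging this would close the only small gap. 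Your treatment of the $H^1$ obstruction at the origin and of the boundary characterizations \eqref{eq:Lambda_n-dir}--\eqref{eq:Lambda_n-neu}, including the converse direction, matches what the paper states or leaves implicit.
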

\begin{proof}[Proof of lemma \ref{lem:bf}]
  The well poseness of the  function $T_\nl$ definition 
is obtained by induction on the intervals $[r_{j-1},r_j]$. Assume that $T_\nl$ is given on  $[r_{j-1},r_j]$ for some $j\ge 1$. On $[r_j,r_{j+1}]$ the ODE (\ref{eq:bf-1}) is regular and has a space of solution of dimension two, therefore $T_\nl$ is uniquely determined by the two initial conditions (\ref{eq:bf-2}).
  \\
  Now on $[0,r_1]$: the ODE (\ref{eq:bf-1}) is singular at $r=0$. The Frobenius method (see e.g. \cite{ross_1964}), with the assumption that $v(r)$ is analytical on $[0,r_1]$,  states that the space of solutions is generated by two functions whose behavior near $r=0$ can be characterized:
  \begin{itemize}
  \item for $n>0$, one solution is $O(r^n)$ at the origin and the second is $O(r^{-n})$,
  \item for $n=0$, one solution is $O(1)$ at the origin and the second is $O(\log(r))$,
  \end{itemize}
  Therefore condition (\ref{eq:bf-3}) ensures existence and uniqueness for $T_\nl$.
  \\ \\
  Let $\Theta$  be an eigenmode for $\lambda\in\Lambda$. On each sub-domain $\Omega_j$, equation (\ref{eq:eig1}) can be rewritten as
  \begin{displaymath}
    \Delta \Theta = \dfrac{1}{k_j}\left(\lambda v_j(r)\Theta - \lambda^2\Theta\right).
  \end{displaymath}
  Using the assumption  that $v_j(r)$ is analytical on $[r_{j-1},r_j]$, elliptic regularity properties imply that $\Theta \in C^\infty(\overline{\Omega_j})$. Moreover, since $\Theta\in\hun$ and $k\nabla \Theta\in\hdiv$, it follows that $\Theta$ and $k\nabla \Theta\cdot \mathbf{n}$ are continuous on each interface between $\Omega_j$ and $\Omega_{j+1}$.
  We consider the Fourier series expansion for $\Theta$
  \begin{displaymath}
    \Theta = \sum_{n\in \Z} \theta_n(r) e^{-in \phi}.
  \end{displaymath}
  Since $\Theta \in C^\infty(\overline{\Omega_j})$ we can differentiate under the sum to obtain
  \begin{displaymath}
    \Delta \Theta = \sum_{n\in \Z} \Delta_n \left(\theta_n(r)\right) e^{-in \phi}.
  \end{displaymath}
  and so equation (\ref{eq:eig1}) ensures that each Fourier mode $\theta_n(r)$ satisfies the ODEs (\ref{eq:bf-1}). It also satisfies the transmission conditions (\ref{eq:bf-2}) because of the continuity of $\Theta$ and of $k\nabla \Theta\cdot\mathbf{n}$ at each interface.
  We already studied the behavior of the solution of (\ref{eq:eig1}) at the origin.  Among the two possible behaviors characterized by the Frobenius method, $\Theta\in \hun$ and $\nabla \Theta\in\ldo$ ensure that $\theta_n(r) = O(r^{|n|})$. As a result we have $\theta_n(r) = \alpha_{|n|} T_{|n|,\lambda}(r)$. 
  \\
  Finally, $\Theta$ being a real function, we can recombine the Fourier modes to get,
  \begin{displaymath}
    \Theta = \sum_{n\ge 0} \beta_{|n|} T_\nl(r)\cos(n\phi) + \sum_{n> 0} \gamma_{|n|} T_\nl(r)\sin(n\phi).
  \end{displaymath}
  We also proved that each term $T_\nl(r)\cos(n\phi)$ or $T_\nl(r)\sin(n\phi)$ itself is an eigenmode for $\lambda$ which obviously are linearly independent. But each eigenvalue $\lambda\in\Lambda$ being of finite multiplicity, the sums above are finite.
\end{proof}

\subsection{Closure functions}
\label{sec:clos-func}
Assuming the following decomposition:
\begin{equation}
  \label{T_analytical_lambda}
  T_\nl(r)  = \sum_{p\in\N} t_\np(r) \lambda^p,
\end{equation}
and formally injecting this expansion into  problem (\ref{eq:bf-1}) provides recursive relations on  $t_\np(r)$,
\begin{displaymath}
  k_j \Delta_n t_{n,p} + k_j t_{n,p-2}
  = v(r) t_{n,p-1}.
\end{displaymath}
which  allows an explicit analytical computation of the functions $t_{n,p}(r)$.
We  prove in section \ref{eigenmodes_expansion} that such a decomposition exists.
The functions $t_{n,p}$ are called the \textbf{closure functions}.
They are precisely defined in section \ref{sec:tnp-def} and their construction with the help of closure problems is given in section \ref{sec:tnp-algo}.
\\ \\
A consequence is that the spectrum in (\ref{eq:Lambda_n-dir}) and (\ref{eq:Lambda_n-neu}) are given by the zeros of the following  analytical series
\begin{equation}
  \label{eq:Lambda_n-2}
  \Lambda = \bigcup_{n\in \N} \Lambda_n,
  \quad \Lambda_n = \left\{
    \lambda\in\C ,\quad  \sum_{p\in\N } c_{n,p} \lambda^p=0
  \right\},
\end{equation}
where the coefficients $c_{n,p}$ are given by $c_{n,p} = t_\np(R)$ in the Dirichlet case or by $c_{n,p} = \dr t_\np(R)$ in the Neumann case. 
In practice:
\begin{enumerate}
\item By truncating the series in equation \eqref{eq:Lambda_n-2} at order $M$, we can compute approximate eigenvalues by searching the zeros of the    polynomial in $\lambda$ 
  $\sum_{p=0}^M c_{n,p} \lambda^p=0$.
\item If $\bar{\lambda}$ is an approximate eigenvalue, the corresponding  approximate eigenmode is $\sum_{p=0}^M t_\np(r)\bar{\lambda}^p$.
\end{enumerate}

For more simplicity we fix in the sequel the value of $n\in \N$ and denote $t_\np=t_p$ and $T_\nl = T_\lambda$.

\subsubsection{Definition}
\label{sec:tnp-def}
We consider the ODEs, for $j=1\dots m$,
\begin{equation}
  \label{eq:cf-1}
  k_j \Delta_n t_{p} + k_j t_{p-2}
  = v(r) t_{p-1}
  \quad \text{on}~~(r_{j-1},r_j),
\end{equation}
together with the transmission conditions for $j=1\dots m-1$,
\begin{equation}
  \label{eq:cf-2}
  t_{p}(r^+_{j}) = t_{p}(r^-_{j}),
  \quad 
  k_j\dr  t_{p}(r^+_{j})=
  k_{j+1}\dr  t_{p}(r^-_{j}),
\end{equation}
and the normalisation condition at the origin,
\begin{equation}
  \label{eq:cf-3}
  \lim_{r\to 0} \dfrac{t_{p}(r)}{r^{n}}=0.
\end{equation}
\begin{lemma}
  \label{lem:cf}
  Setting $t_{-1} = 0$ and $t_{0} = r^n$, then the closure functions $(t_p(r))_{p\ge 1}$  satisfying \eqref{eq:cf-1}, \eqref{eq:cf-2} and \eqref{eq:cf-3} for $p\ge 1$ are uniquely defined.
\end{lemma}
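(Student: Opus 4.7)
The plan is induction on $p \geq 1$, starting from the given data $t_{-1} = 0$ and $t_0 = r^n$. Suppose $t_0, \ldots, t_{p-1}$ have been uniquely constructed. Then \eqref{eq:cf-1} becomes the inhomogeneous linear second-order ODE $\Delta_n t_p = v_j(r) t_{p-1}/k_j - t_{p-2}$ on each sub-interval $(r_{j-1}, r_j)$, whose right-hand side is already determined by the induction hypothesis. The task reduces to showing that this ODE, together with the transmission conditions \eqref{eq:cf-2} and the normalization \eqref{eq:cf-3}, admits a unique solution. I would treat the singular interval $(0, r_1)$ differently from the regular intervals $(r_{j-1}, r_j)$ with $j \geq 2$.

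On the singular interval, the substitution $t_p(r) = r^n w(r)$ is the natural move; a direct computation gives $\Delta_n(r^n w) = r^{-n-1}(r^{2n+1} w')'$, so the ODE reduces to
\begin{displaymath}
\bigl(r^{2n+1}\, w'(r)\bigr)' = \frac{r^{n+1}}{k_1}\bigl(v_1(r) t_{p-1}(r) - k_1 t_{p-2}(r)\bigr),
\end{displaymath}
and \eqref{eq:cf-3} is equivalent to $w(r) \to 0$ as $r \to 0^+$. By induction, each of $t_{p-1}/r^n$ and $t_{p-2}/r^n$ is bounded near the origin (indeed each is either $0$, equal to $1$, or tends to $0$ by the normalization at an earlier stage), so the right-hand side is $O(r^{2n+1})$ and in particular integrable at $r=0$. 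Integrating twice from the origin, I obtain a unique $w \in C^0([0, r_1])$: the first integration constant must vanish to avoid a $r^{-2n-1}$ singularity in $w'$ (which would correspond to the unwanted $r^{-n}$ homogeneous mode), and the second is fixed to be $0$ by $w(0)=0$ (otherwise one would add the $r^n$ homogeneous mode, contradicting \eqref{eq:cf-3}). This produces a unique $t_p$ on $(0, r_1)$ with $t_p/r^n$ continuous up to the origin.

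On each remaining interval $(r_{j-1}, r_j)$ with $j \geq 2$, the ODE has analytic coefficients and no singularity, so by Cauchy--Lipschitz it has a unique solution once $t_p(r_{j-1}^+)$ and $\dr t_p(r_{j-1}^+)$ are prescribed; these two scalars are delivered by the transmission conditions \eqref{eq:cf-2} from the values just obtained at $r_{j-1}^-$. Propagating across all $m-1$ interfaces yields the unique $t_p$ on $(0, R)$, and the construction ensures that $t_p/r^n$ stays bounded near the origin so the induction closes. The principal difficulty is Step 2, namely the analysis at the regular singular point $r=0$: one must argue that the two-dimensional space of homogeneous solutions (spanned by $r^n, r^{-n}$ for $n>0$, or $1, \log r$ for $n=0$) contributes nothing to $t_p$. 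The substitution $t_p = r^n w$ is the cleanest way to encode this, turning the admissibility requirement into the two transparent conditions $w(0)=0$ and non-singularity of $r^{2n+1}w'$ at the origin.
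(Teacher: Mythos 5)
Your argument is correct and is essentially the paper's own construction: your double integration of $(r^{2n+1}w')'$ from the origin with both constants forced to vanish reproduces exactly the operator $F_1[f_{p-1}]$ of equation \eqref{eq:delta_n-inv} (the substitution $t_p=r^nw$ being just a repackaging of the paper's argument that $\alpha_1=\beta_1=0$), and the propagation across the interfaces via the transmission conditions is the same $2\times2$ determination of the homogeneous part on each regular compartment. No gap.
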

The proof is set-up by construction in section \ref{sec:tnp-algo}.

\subsubsection{Construction }
\label{sec:tnp-algo}
We assume that for some $p\ge 1$, $t_{p-2}(r)$ and $t_{p-1}(r)$ are known.
We hereby derive $t_p(r)$. Let us first introduce the operators $F_j$ for $j=1,\dots m$, defined for a function $f$ 
\begin{equation}
  \label{eq:delta_n-inv}
  F_j[f](r):=
  r^{n} \int_{r_{j-1}}^r
  \dfrac{1}{x^{2n+1}}
  \int_{r_{j-1}}^x y^{n+1} f(y) \dy \dx
  ,
\end{equation}
which is the inverse of operator $\Delta_n$.
We denote $\psi_1(r)=r^n$ and $\psi_2(r)=r^{-n}$ if $n>0$ or $\psi_2(r)=\ln(r)$ if $n=0$, that are  the basis solution of $\Delta_n f = 0$.
We consider the right hand side  $f_{p-1}$
\begin{equation}
  \label{eq:fp-def}
  f_{p-1} := \dfrac{v}{k}t_{p-1} - t_{p-2}.
\end{equation}
Then on each compartment $(r_{j-1}, r_j)$, $t_p(r)$ is solution of \eqref{eq:cf-1} and therefore reads,
\begin{displaymath}
  t_p(r) = \alpha_j \psi_1(r) + \beta_j \psi_2(r) +  F_j[f_{p-1}](r).
\end{displaymath}
We finally show how to compute the constants $\alpha_j$ and $\beta_j$

\paragraph*{First compartment $[0,r_1]$}
Assume that $t_{p-1}=O(r^n)$ and $f_{p-1}=O(r^n)$ at r=0, which is true for $p=1$.
\\
We get that $F_1[f_{p-1}]=O(r^{n + 2})$ and the normalisation condition \eqref{eq:cf-2} sets $\alpha_1=\beta_1=0$. We then have,
\begin{equation}
  \label{eq:tp-1}
  t_p(r) = F_1[f_{p-1}](r) \quad  {\rm on} \quad [0,r_1].
\end{equation}
It follows that $t_p=O(r^{n+2})=O(r^n)$ and $f_p=O(r^n)$. 

\paragraph*{Further compartments $[r_j,r_{j+1}]$, $j\ge 1$}
We  assume  that $t_p(r)$ has been computed on the compartment $[r_{j-1},r_{j}]$ and determine $t_p(r)$ on $[r_j,r_{j+1}]$, $j\ge 1$.
\\
We clearly have $F_{j+1}[f](r_{j})=0$ and $\dr F_j[f](r_{j})=0$.
Then equation (\ref{eq:cf-2}) at $r_j$ reformulates as
\begin{align*}
  \alpha_j\psi_1(r_{j}) + \beta_j\psi_2(r_{j}) &= t_p(r^-_{j})
  \\
  \alpha_j\dr\psi_1(r_{j})+ \beta_j\dr\psi_2(r_{j}) &= \dfrac{k_{j}}{k_{j+1}} \dr t_p(r^-_{j}),
\end{align*}
which equation has a unique solution since $\psi_1$ and $\psi_2$ form a basis for the solutions of the homogeneous equation $\Delta_n f = 0$.

\subsection{Series expansion of the eigenmodes}
\label{eigenmodes_expansion}
Our main result is the following.
\begin{theorem} 
  \label{th:l-an}
  The functions $T_\nl$  satisfy,
  \begin{align*}
    T_\nl(r)  = \sum_{p\in\N} t_\np(r) \lambda^p&,
    \quad {\rm on} \quad [0,R]
    \\
    \dr T_\nl(r) = \sum_{p\in\N } \dr t_\np(r) \lambda^p&,
    \quad {\rm on} \quad [r_{j-1},r_j],\quad j=1\dots m,
  \end{align*}
  where the $(t_\np(r))_{p\in\N}$ are the closure functions
  introduced in the previous section.
\end{theorem}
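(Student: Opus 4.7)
The plan is to introduce the formal series
\[
S_\lambda(r)\,:=\,\sum_{p\in\N} t_{n,p}(r)\,\lambda^p
\]
and proceed in three stages. First, establish that $S_\lambda(r)$ and its $r$-derivative on each compartment $[r_{j-1},r_j]$ converge for every $\lambda\in\C$, uniformly on compact subsets of $\lambda$; this makes $\lambda\mapsto S_\lambda(r)$ entire for each fixed $r$. Second, verify that $S_\lambda$ solves the ODE \eqref{eq:bf-1}, the transmission conditions \eqref{eq:bf-2}, and the normalization \eqref{eq:bf-3}. Third, identify $S_\lambda$ with $T_{n,\lambda}$ by invoking the uniqueness part of Lemma~\ref{lem:bf}.

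\textbf{Convergence estimates.} The key input for the first stage is the quantitative gain built into the inverses $F_j$ of \eqref{eq:delta_n-inv}. On the first compartment $[0,r_1]$ the model computation
\[
F_1\!\left[y^{n+2q}\right](r) \,=\, \frac{r^{n+2q+2}}{(2q+2)(2n+2q+2)}
\]
shows that each application of $F_1$ brings not only two extra powers of $r$ but also a denominator of order $(q+1)(n+q+1)$. Iterating the recursion $t_{n,p}=F_1[f_{n,p-1}]$ with $f_{n,p-1}=(v/k_1)\,t_{n,p-1}-t_{n,p-2}$, and using that $v/k_1$ is bounded on $[0,r_1]$ by analyticity of $v$, I would derive bounds of the form $|t_{n,p}(r)|\le M^{p}\,r^n/(\lfloor p/2\rfloor!)^{\alpha}$ for some $\alpha>0$. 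Such Pochhammer-type decay dominates $|\lambda|^p$ for every fixed $\lambda\in\C$, so $\sum_p |t_{n,p}(r)|\,|\lambda|^p$ converges for all $\lambda$. To propagate the estimate to the other compartments $[r_j,r_{j+1}]$ I would use the representation of Section~\ref{sec:tnp-algo}, $t_{n,p}=\alpha_{j+1,p}\psi_1+\beta_{j+1,p}\psi_2+F_{j+1}[f_{n,p-1}]$, in which the $2\times 2$ transmission linear system has $p$-independent coefficients; consequently $(\alpha_{j+1,p},\beta_{j+1,p})$ inherits the factorial decay of $(t_{n,p},\dr t_{n,p})(r_j^-)$, and the bound transfers to $[r_j,r_{j+1}]$. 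Termwise differentiation of the $F_j$-formulas supplies the analogous bound for $\dr t_{n,p}$, giving the second series of the theorem.

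\textbf{Identification and main obstacle.} With uniform convergence available, I would substitute $S_\lambda$ into \eqref{eq:bf-1}: matching coefficients of $\lambda^p$ on each compartment returns exactly \eqref{eq:cf-1}; continuity and flux matching across each interface $r_j$ split termwise into \eqref{eq:cf-2}; and the Frobenius normalization $S_\lambda\sim r^n$ at the origin becomes $t_{n,0}=r^n$ plus the vanishing condition \eqref{eq:cf-3} for $p\ge 1$. All of these hold by construction, so $S_\lambda$ satisfies the same transmission problem as $T_{n,\lambda}$ and Lemma~\ref{lem:bf} forces $S_\lambda=T_{n,\lambda}$. The principal obstacle will be the first stage: a naive sup-norm estimate on $F_j$ only yields a bounded operator and therefore a finite radius of convergence in $\lambda$. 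What makes the radius infinite---and the series truly an \emph{entire} function of $\lambda$---is the quadratic vanishing at the left endpoint together with the factorial denominators displayed above. Keeping clean bookkeeping of this factorial decay through the transmission steps, where $\psi_2=r^{-n}$ (or $\ln r$ when $n=0$) may be large in magnitude, is the technical core of the argument.
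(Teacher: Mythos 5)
Your proposal is correct, and on the only genuinely delicate part of the theorem --- the singular compartment $[0,r_1]$ --- it coincides with the paper's argument: both exploit the explicit iterates $F_1^{(i)}[r^n]=K_i r^{n+2i}$ with factorially decaying $K_i$ (your model computation $F_1[y^{n+2q}]=r^{n+2q+2}/\big((2q+2)(2n+2q+2)\big)$ is exactly the paper's Lemma~\ref{lem:proof-analycity-1} in disguise), deduce normal convergence of $\sum_p t_{n,p}\lambda^p$ and of its derivative on every compact in $\lambda$, verify the transmission problem termwise, and conclude by uniqueness in Lemma~\ref{lem:bf}. Where you diverge is on the outer compartments $[r_j,r_{j+1}]$, $j\ge 1$: you propose to push explicit factorial bounds through the transmission systems, whereas the paper sidesteps all estimates there by invoking the classical theorem on analytic dependence of solutions of a \emph{regular} ODE on the parameter and on the initial data (citing Arnold), combined with uniqueness of the Taylor coefficients from Lemma~\ref{lem:cf}. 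Your route is viable but heavier, and it has one bookkeeping trap you should make explicit: on $[r_j,r_{j+1}]$ the homogeneous contributions $\alpha_{j+1,p}\psi_1+\beta_{j+1,p}\psi_2$ are \emph{not} smoothed by subsequent applications of $F_{j+1}$, so when you unroll the recursion $t_p=(\text{hom.\ part}_p)+F_{j+1}[(v/k)t_{p-1}-t_{p-2}]$ the factorial decay of the level-$q$ homogeneous term must come entirely from the inherited boundary data $(t_q,\dr t_q)(r_j^-)$, while the remaining gain comes from the quasi-nilpotency of the iterated Volterra operator $F_{j+1}$ (norm $\lesssim C^i/i!$ for the $i$-th iterate), not from the singular weight that powered the $[0,r_1]$ estimate. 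Provided you carry the bound on $\dr t_{n,p}$ at each right endpoint alongside that on $t_{n,p}$ (you need both to feed the $2\times 2$ transmission system), the induction closes; the paper's soft argument buys brevity, yours buys uniform explicit control of the truncation error on the whole of $[0,R]$, which is of independent practical interest for the numerical scheme of Section~\ref{sec:clos-func}.
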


\subsubsection*{Proof of theorem \ref{th:l-an}}
We fix the value of $n\in \N$ and simply denote $t_\np=t_p$ and $T_\nl = T_\lambda$.
\\
Assume that the three functions $T_\lambda(r)$, $\dr T_\lambda (r)$ and $\Delta_n T_\lambda(r)$ are analytical for $r\in [r_{j-1},r_j]$ and $\lambda\in\C$. We can write  $T_\lambda(r)= \sum_{p\ge 0} s_p(r)\lambda^p$. 
The derivation theorem imply that $\dr T_\lambda(r)= \sum_{p\ge 0} \dr s_p(r)\lambda^p$ and that $\Delta_n T_\lambda(r)= \sum_{p\ge 0} \Delta_n s_p(r)\lambda^p$.
Injecting this in (\ref{eq:bf-1}) shows  the $s_p(r)$  satisfy (\ref{eq:bf-1}). Similarly the transmission and renormalisation conditions (\ref{eq:bf-2})~(\ref{eq:bf-3}) imply that the $s_p(r)$  satisfy  (\ref{eq:cf-2})~(\ref{eq:cf-3}).
Uniqueness in lemma \ref{lem:cf} then imply that  $s_p(r)=t_p(r)$.
\\ \\
Let us then prove that $T_\lambda(r)$, $\dr T_\lambda (r)$ and $\Delta_n T_\lambda(r)$ are analytical for $r\in [r_{j-1},r_j]$ and $\lambda\in\C$ for all $j=1\dots m$.
We proceed by induction.
\\ \indent
Assume  that this is true on $[r_{j-1},r_j]$. Then the initial data $\lambda\rightarrow T_\lambda(r_j)$ and $\lambda \rightarrow \partial_r T_\lambda (r_j)$ are analytical.
On $[r_{j},r_{j+1}]$, $T_\lambda$ is the solution of the regular ODE (\ref{eq:bf-1}) that  analytically depends on  $\lambda$, $r$ and whose initial conditions (\ref{eq:bf-2}) at  $r_j$ also analytically depend on $\lambda$. 
Classical results on ODEs (see e.g. \cite[section 32.5]{arnold-ode-1973}) state that $T_\lambda(r)$ analytically depends on $\lambda$ and $r$ on $[r_{j},r_{j+1}]$.
This is also true for $\Delta_n T_\lambda$ since $\Delta_n T_\lambda= -\lambda^2 T_\lambda + \lambda v/k T_\lambda$. Finally this is also true for $\dr T_\lambda$ by integration.
\\ \indent
It remains to prove the result for $r\in [0,r_1]$. This is harder because of the singularity at $r=0$. 
The problem being local at $r=0$, we can assume $r_1\le 1.$
We formally introduce the series,
\begin{equation*}
  A_\lambda(r) = \sum_{p\ge 0} t_p(r)\lambda^p,\quad 
  B_\lambda(r) = \sum_{p\ge 0} \dr t_p(r)\lambda^p,\quad 
  C_\lambda(r) = \sum_{p\ge 0} \Delta_n t_p(r)\lambda^p.
\end{equation*}

Let us denote $F[f] = F_1[f]$ for $F_1[f]$ defined in (\ref{eq:delta_n-inv}).
We introduce $F^{(i)}=F\circ \dots \circ F$ the $i^{th}$ iterate of $F$. 
Let us define $\tau_i = F^{(i)}[t_0]$ for $t_0(r)=r^n$ the 0$^{th}$ closure function.
It is easy to compute $\tau_i$,
\begin{equation}
  \label{eq:taui-Ki}
  \tau_i(r) = K_i r^{n+2i},\quad  K_i^{-1} = 2^{2i} i! (i+1)\dots(i+n).
\end{equation}
We consider the constant $M=\max(\Vert v/k\Vert_\infty, 1 )\ge 1$.
\begin{lemma}
  \label{lem:proof-analycity-1}
  If $r_1 \le 1$, then on $[0,r_1]$ we have,
  \begin{displaymath}
    |t_p(r)| \le \alpha_p,\quad 
    \vert \dr t_p(r)\vert \le \alpha_p,\quad 
    \vert \Delta_n t_p(r)\vert \le \alpha_p.
  \end{displaymath}
  with $\alpha_p=(2M)^{p}K_{i-1}$ for $p=2i$ or $p=2i+1$.
\end{lemma}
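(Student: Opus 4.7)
The plan is to proceed by strong induction on $p$, exploiting the recursion $t_p = F[f_{p-1}]$ with $f_{p-1} = (v/k)\,t_{p-1} - t_{p-2}$ and the identity $\Delta_n t_p = f_{p-1}$. The delicate point is that on $[0,r_1]$ a uniform bound $|f_{p-1}|\le C$ does not propagate through $F$: for $n\ge 2$, $F[C] = \frac{C}{n+2}r^n\int_0^r x^{1-n}\,dx$ already diverges, so the induction must keep track of how fast $t_p(r)$ vanishes at the origin.

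Concretely, I would strengthen the inductive hypothesis to the pointwise bound
\[
|t_p(r)| \,\le\, (2M)^p\,K_i\,r^{n+2i}, \qquad r\in[0,r_1], \quad i=\lfloor p/2\rfloor,
\]
together with a companion bound $|\partial_r t_p(r)|\le (n+2i)(2M)^p K_i\,r^{n+2i-1}$ and the trivial Laplacian relation $\Delta_n t_p = f_{p-1}$. Since $r_1\le 1$, every positive power of $r$ is bounded by $1$, so the uniform bound $\alpha_p=(2M)^pK_{i-1}$ of the lemma follows after absorbing the derivative factor $n+2i$ into the slack $K_{i-1}/K_i = 4i(n+i) \ge n+2i$, together with the decreasing monotonicity of $(K_j)_j$. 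The base cases $p=0,1$ (where $t_0(r)=r^n$ and $t_1=F[(v/k)r^n]$) are handled directly using the kernel positivity $|F[f]|\le F[|f|]$ and the explicit identity $F[r^n] = K_1 r^{n+2}$.

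For the inductive step, plug the inductive hypothesis into $|f_{p-1}|\le M|t_{p-1}|+|t_{p-2}|$ to obtain a two-monomial pointwise bound $A\,r^{n+2j_1} + B\,r^{n+2j_2}$. Dominating the higher-power term by the lower-power one via $r\le 1$ and using $M\ge 1$ gives $|f_{p-1}(r)| \le (2M)^p K_{i-1}\,r^{n+2(i-1)}$; this step is what produces the factor $2M$ rather than $M$, arising from the two-term structure of $f_{p-1}$. The bound on $|t_p|$ then follows from $|F[f]|\le F[|f|]$ and the explicit identity $F[r^{n+2j}] = (K_{j+1}/K_j)\,r^{n+2j+2}$; the bound on $|\partial_r t_p|$ follows by differentiating the integral representation of $F[f_{p-1}]$ and applying the same estimates. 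The hard part is the bookkeeping: simultaneously tracking the leading power of $r$ (to keep $F$ finite), the constants $K_j$, and the multiplicative factors produced by differentiation and by dominating monomials, all within the slack provided by $(2M)^p K_{i-1}$.
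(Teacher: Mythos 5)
Your proof is correct and follows essentially the same route as the paper: both rest on the monotonicity of $F$, the explicit action $F[r^{n+2j}]=(K_{j+1}/K_j)\,r^{n+2j+2}$, the domination of higher powers of $r$ by lower ones on $[0,1]$ with the decreasing $K_i$, and the factor $2M$ coming from the two-term structure of $f_{p-1}$; the paper merely unrolls the recursion into a sum of at most $2^p$ terms $M^{n_k}\tau_{m_k}$ where you run the equivalent strong induction with the pointwise hypothesis $|t_p|\le (2M)^pK_{\lfloor p/2\rfloor}r^{n+2\lfloor p/2\rfloor}$. Your treatment of $\partial_r t_p$ (keeping the $r$-powers and absorbing $n+2i$ into $K_{i-1}/K_i=4i(n+i)$) is a slightly sharper bookkeeping of the same integral estimate the paper performs with its constant $C\le 1$.
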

With definition (\ref{eq:taui-Ki}) of the coefficients $K_i$, it is clear that the series $\sum_{p\ge 0} \alpha_p\lambda^p$ converges over $\C$. 
The three series $A_\lambda(r)$, $B_\lambda(r)$ and $C_\lambda(r)$ 
therefore are normally converging for $r\in[0,r_1]$ and for $\lambda$ in any compact in $\C$. As a result the integration theorem implies that $B_\lambda=\partial_r A_\lambda$ and $C_\lambda=\Delta_n A_\lambda$. Relation (\ref{eq:cf-1}) ensures that $A_\lambda$ satisfies (\ref{eq:bf-1}) whereas relation (\ref{eq:cf-3}) together with $t_0=r^n$ ensures that $A_\lambda$ satisfies (\ref{eq:bf-3}). Uniqueness in lemma \ref{lem:bf} then implies that $A_\lambda = T_\lambda$. This proves theorem \ref{th:l-an} on $[0,R]$ and ends this proof.

\begin{proof}[Proof of lemma \ref{lem:proof-analycity-1}]
  We will systematically use that $r\le 1$, that  $K_i$ and $\tau_i(r)$ in (\ref{eq:taui-Ki}) are decreasing and that   the operator $F$ satisfies,
  \begin{displaymath}
    h_1\le h_2 ~\Rightarrow~ F [h_1]\le F [h_2],
    \quad |F[ h]|\le F[ |h|].
  \end{displaymath}
  With definitions (\ref{eq:fp-def})-(\ref{eq:tp-1}) we have the upper bound,
  \begin{displaymath}
    \vert t_p\vert  \le F[\vert f_{p-1}\vert ] \le M(F[\vert t_{p-1}\vert + F[\vert t_{p-2}\vert).
  \end{displaymath}
  By recursion, we obtain an upper bound involving the $\tau_i=F^{(i)}(t_0)$ of the form
  \begin{displaymath}
    \vert t_p\vert \le \sum_k M^{n_k} \tau_{m_k}.
  \end{displaymath}
  The number of terms in the sum is less than $2^p$.
  Index $n_k$ is smaller than $p$ and $M^{n_k}\le M^p$. 
  The minimal value for $m_k$ is $i$ if $p=2i$ or $i+1$ if $p=2i+1$, so that $\tau_{m_k}\le \tau_{i}$ or $\tau_{m_k}\le \tau_{i+1}$ respectively. Therefore,
  \begin{displaymath}
    \vert t_p\vert \le \left\{
      \begin{array}{ll}
        (2M)^{p}K_i r^{n+2i}&~~{\rm if}~~~p=2i
        \\
        (2M)^{p}K_{i+1}r^{n+2(i+1)}&~~{\rm if}~~~p=2i+1
      \end{array}
    \right. ,
  \end{displaymath}
  which upper bound ensures the first inequality 
  in lemma \ref{lem:proof-analycity-1}.
  \\
  From that last inequality it is easy to check that 
  $\vert t_p\vert + \vert t_{p-1}\vert \le 2 (2M)^{p} K_i r^{n+2i}$ if $p=2i$ or $p=2i+1$. 
  For $p=2i$ or $p=2i+1$ it follows that
  \begin{displaymath}
    f_p = \vert \Delta_n t_{p+1}\vert \le M\left(
      \vert t_p\vert + \vert t_{p-1}\vert
    \right) 
    \le (2M)^{p+1} K_i r^{n+2i}.
  \end{displaymath}
  This gives the third inequality in lemma \ref{lem:proof-analycity-1}.
\\
  By differentiating (\ref{eq:tp-1}) we get,
  \begin{align*}
    \dr t_{p+1}(r) =
    nr^{n-1} \int_0^r
    \dfrac{1}{x^{2n+1}}
    \int_0^x y^{n+1} f_p(y) \dy \dx +
    \dfrac{1}{r^n}\int_0^r y^{n+1} f_p(y) \dy 
  \end{align*}
  It follows that
  \begin{displaymath}
    \vert \dr t_{p+1}(r)\vert \le (2M)^{p+1} K_i C,
  \end{displaymath}
  with,
  \begin{align*}
    C&=
    nr^{n-1} \int_0^r
    \dfrac{1}{x^{2n+1}}
    \int_0^x y^{2n+2i+1}
    \dy \dx +
    \dfrac{1}{r^n}\int_0^r y^{2n+2i+1}\dy
    \\
    &\le 
    nr^{n-1} \int_0^r
    \dfrac{1}{x^{2n+1}}
    \int_0^x y^{2n}
    \dy \dx +
    \dfrac{1}{r^n}\int_0^r y^{2n}\dy = 
    \dfrac{nr^n + r^{n+1}}{2n+1}
    \le 1,
  \end{align*}
  implying the second inequality in lemma \ref{lem:proof-analycity-1}.
\end{proof}

\subsection{Extension to planar configurations}
We consider layered  planar configurations
as depicted on Figure \ref{fig:planar}. 
The transverse coordinate perpendicular to the layers
is denoted by $x$.
The coordinate $x$ is homologue to the radial coordinate $r$ in the cylindrical case.
The origin is set at the center so that $-R\le x\le R$ with $2R$ the total thickness of the geometry. 
\begin{figure}[htbp]
  \centering
\begin{picture}(0,0)%
\includegraphics{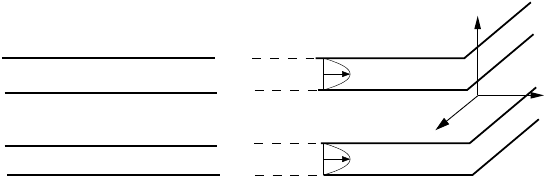}%
\end{picture}%
\setlength{\unitlength}{1865sp}%
\begingroup\makeatletter\ifx\SetFigFont\undefined%
\gdef\SetFigFont#1#2#3#4#5{%
  \reset@font\fontsize{#1}{#2pt}%
  \fontfamily{#3}\fontseries{#4}\fontshape{#5}%
  \selectfont}%
\fi\endgroup%
\begin{picture}(9273,2991)(463,-4684)
\put(1756,-3100){\makebox(0,0)[lb]{\smash{{\SetFigFont{11}{13.2}{\familydefault}{\mddefault}{\updefault}{\color[rgb]{0,0,0}$\Omega_3$}%
}}}}
\put(1801,-3906){\makebox(0,0)[lb]{\smash{{\SetFigFont{11}{13.2}{\familydefault}{\mddefault}{\updefault}{\color[rgb]{0,0,0}$\Omega_2$}%
}}}}
\put(1756,-4550){\makebox(0,0)[lb]{\smash{{\SetFigFont{11}{13.2}{\familydefault}{\mddefault}{\updefault}{\color[rgb]{0,0,0}$\Omega_1$}%
}}}}
\put(6571,-3121){\makebox(0,0)[lb]{\smash{{\SetFigFont{11}{13.2}{\familydefault}{\mddefault}{\updefault}{\color[rgb]{0,0,0}$v_3>0$}%
}}}}
\put(6526,-4471){\makebox(0,0)[lb]{\smash{{\SetFigFont{11}{13.2}{\familydefault}{\mddefault}{\updefault}{\color[rgb]{0,0,0}$v_1>0$}%
}}}}
\put(5896,-3690){\makebox(0,0)[lb]{\smash{{\SetFigFont{11}{13.2}{\familydefault}{\mddefault}{\updefault}{\color[rgb]{0,0,0}$v_2=0$}%
}}}}
\put(9721,-3346){\makebox(0,0)[lb]{\smash{{\SetFigFont{11}{13.2}{\familydefault}{\mddefault}{\updefault}{\color[rgb]{0,0,0}$z$}%
}}}}
\put(7606,-3751){\makebox(0,0)[lb]{\smash{{\SetFigFont{11}{13.2}{\familydefault}{\mddefault}{\updefault}{\color[rgb]{0,0,0}$y$}%
}}}}
\put(8416,-1906){\makebox(0,0)[lb]{\smash{{\SetFigFont{11}{13.2}{\familydefault}{\mddefault}{\updefault}{\color[rgb]{0,0,0}$x$}%
}}}}
\end{picture}%
   \\[15pt]
  \caption{Example of a planar configurations.
  }
  \label{fig:planar} 
\end{figure}
\\ \\
Actually, the results that we obtained for concentric cylindrical configurations are  easier to establish in the case of layered  planar configurations.  This is because
the operator $\Delta_n:= \big( \diff{^2 }{x^2}+n^2 \big) $ associated with the $y$-periodic decomposition 
 \begin{displaymath}
    \Theta(x,y) = \sum_{n\ge 0} T_\nl(x)\cos(n2\pi y) + \sum_{n> 0} T_\nl(x)\sin(n2\pi y),
  \end{displaymath}
is no more singular in Cartesian coordinates. Hence, the technical issues 
associated with the proof of analyticity in the variable $\lambda$ for the functions
$T_\lambda$, $\d{} T_\lambda/\d{r} $ and $\Delta_n T_\lambda$ 
are no longer present  in this case.
Furthermore, each step of the proofs provided in sections
\ref{eigenmode_decomposition} and 
\ref{sec:clos-func} 
directly apply to the planar case, so that theorem  \ref{th:l-an} also holds.

\section{Exemples of applications}
\label{num_exemple}
In this section we develop various exemples of solutions so as to illustrate the versatility and usefulness of the previously presented theoretical results.
In section \ref{explicit_familly} we first give  explicit general solutions adapted for  two families of geometries, i.e planar or cylindrical, for general
boundary conditions. We pursue towards illustrating interesting and relevant solutions considering two idealyzed but non trivial
configurations in the subsequent sections. In  section \ref{concentric_cylinder_case} we showcase how a localized heat source can lead to
a 'hot spot' of temperature in its neibourghood, and illustrate how our mesh-less analytical method can effectively capture the temperature
peak.
A second exemple is provided in section \ref{parallel_plane_equilibrated_case} where we
examine a double-pass configuration in the planar framework for which, again, a  localized heat source is imposed nearby the origin.

\subsection{Explicit families of solutions}
\label{explicit_familly}
As in equation (\ref{eq:bc}), we will consider symmetric boundary conditions
(only depending on $z$).
Thus we will consider the spectrum $\Lambda_0$ in definitions (\ref{eq:Lambda_n-dir})-(\ref{eq:Lambda_n-neu}) for $n$=0. In the Dirichlet case
$\Lambda_0 = \left\{
  \lambda\in\C ,\quad  T_{0,\lambda}(R)=0
\right\}$
and in the Neuman case
$\Lambda_0 = \left\{
  \lambda\in\C ,\quad  \d{}T_{0,\lambda}/\d{r}(R)=0
\right\}$.
The spectrum is computed with the closure functions as in equation (\ref{eq:Lambda_n-2}).
It decomposes as in equation (\ref{eq:L}):
$\Lambda_0 = \{\lambda_{+i},~\lambda_{-i},~i\in\N^\star\}$ 
with
$\lambda_{+i}<0$ the upstream modes and $\lambda_{-i}>0$ the downstream modes.
We will simply denote $T_{\pm i} = T_{\lambda_{\pm i},0}$.
Remember that $\lambda_{\pm i}$ and  $T_{\pm i}$ depend on the nature of the boundary condition (Dirichlet or Neumann).
\subsubsection*{Dirichlet  boundary condition}
For the lateral Dirichlet  boundary condition in equation (\ref{eq:bc}), the temperature solution  are given in \cite{BPP-2014} for the cylindrical case
\begin{displaymath}
  T(r,z) = g(z)+\sum_{i\in \Z^\star} \alpha_i c_i(z) T_i(r)\, e^{\lambda_i z},
\end{displaymath}
with, denoting $k$ the conductivity in the boundary annulus:
\begin{displaymath}
\alpha_i = \dfrac{2\pi R}{\lambda_i^2} k\diff{T_i}{r}(R).
\end{displaymath}
This adapts to the 
parallel planar configuration with
\begin{displaymath}
\alpha_i = \dfrac{k}{\lambda_i^2} 
\Big(\diff{T_i}{r}(R) + \diff{T_i}{r}(-R) \Big)
\end{displaymath}
In both cylindrical and planar cases, the functions 
$c_i(z)e^{\lambda_{i} z} $ are given by the convolution between $\d{g}/\d{z}$ and the exponentially decaying modes
\begin{equation}
  \label{eq_ci_Di}
  c_{-i}(z)= \int_{z}^{+\infty} g'(\xi)   
  e^{-\lambda_{-i} \xi} \d{\xi} 
  ,\quad 
  c_{+i}(z)= -\int_{-\infty}^{z} g'(\xi)
  e^{-\lambda_{+i} \xi} \d{\xi},
\end{equation}
for the upstream modes and downstream modes respectively.

\subsubsection*{Neumann boundary condition \&
  non-equilibrated case}
Consider now  a  Neumann boundary conditions (\ref{eq:bc}) in the case where 
$Q:=\int_\Omega v \d{x}\neq0 $, \textit{i.e.}
the total convective flux is not zero.
Then from \cite{BPP-2014} the solution reads 
\begin{equation}
  \label{eq:solDC}
  T(r,z) = 
  \frac{P}{Q}G(z)+\sum_{i\in \Z^\star} \alpha_i c_i(z) T_i(r)\, e^{\lambda_i z},
\end{equation}
with  $G(z)=\int_{-\infty}^z g(\xi) d\xi$ the primitive of the heat source $g(z)$
and $P$ the perimeter of the external cylinder.
Note that the temperature indeed is defined up to an additive constant that has been fixed by setting $T_{-\infty }=0$ here.
\\
For the cylindrical configuration, we choose a Poiseuille velocity profile 
$v(r)={\rm Pe}(1-(r/r_0)^2)$, where $\Pe$ is the P\'eclet number that quantifies the ratio between convection and diffusion (here based on the maximal velocity in the tube).
We have 
$P/Q= 4R/(\Pe r_0^2)$ and
\begin{equation}
  \label{alpha_i_cylindrical}
\alpha_i = \dfrac{2\pi R}{\lambda_i}  T_i(R).
\end{equation}
Whereas,  for parallel planar configurations:
\begin{equation}
  \label{alpha_i_Plane}
\alpha_i = \dfrac{1}{\lambda_i} \big( T_i(R) + T_i(-R) \big).
\end{equation}
In both cases, the functions $c_i(z) e^{\lambda_{i} z}$ are given by the convolution between the imposed flux at the boundary and the exponentially decaying modes
\begin{equation}
 \label{eq:cn-N-def}
  c_{-i}(z)= \int_{z}^{+\infty} g(\xi)e^{-\lambda_{-i} \xi} 
  ,\quad 
  c_{+i}(z)= -\int_{-\infty}^{z} g(\xi)e^{-\lambda_{+i} \xi} \d{\xi} 
  ,
\end{equation}
for the upstream modes and downstream modes respectively.
\subsubsection*{Neumann boundary condition \&  equilibrated case}
\label{Neumann_bc_equilibrated}
Consider now  a  Neumann boundary conditions (\ref{eq:bc}) in the case where the total convective flux 
cancels out:
 $Q:=\int_\Omega v \d{x}=0 $ . This is the case of an equilibrated exchanger.   In this case, the solution displays a distinct form (see \cite{BPP-2014})  involving the (adiabatic) kernel $T_0$
 solution of 
\begin{equation}
  \label{T_0kernel}
 \div(  k \nabla T_0) = v \, ,\quad 
  \nabla T_0 \cdot  {\bf n} \vert_{R}  =  0. 
\end{equation}
In the section \ref{parallel_plane_equilibrated_case} we will consider an exemple of such a configuration for which we will give an explicit solution of the kernel $T_0$. In general form, the complete solution associated with 
equilibrated case $Q=0$, reads  
\begin{equation}
  \label{eq:solNE}
  T(r,z) = a \mathcal{G}(z)+G(z)(aT_0+b)+\sum_{i\in \Z^\star} 
  \alpha_i c_i(z) T_i(r)\, e^{\lambda_i z},
\end{equation}
with $ \mathcal{G}(z)=\int_{-\infty}^z G(\xi) d\xi$, the second primitive of the heat source $g(z)$,   $\alpha_i$
and $c_i(z)$ again given by (\ref{eq:solDC}) and (\ref{eq:cn-N-def})
and where $a$ and $b$ are two constants characterizing the heat exchange with values detailed below.
Note that for this configuration the temperature field is defined up to $C_1(z+T_0) + C_2$, see details in \cite{BPP-2014}.
\\ 
In cylindrical configuration the parameters $a$ and $b$ are  given by
\begin{equation}
  \label{ab_cylinder}
  a = \frac{ R}{\int_0^R (vT_0-k)r \d{r}},   \,\, \,\,
  b = \frac{a^2}{ R} \int_0^R (2k-vT_0)T_0r\d{r}\,+a\, T_0(R) 
  \end{equation}
whereas for parallel planar configuration the parameters $a$ and $b$ read
\begin{equation}
  \label{ab_parallel}
  a =  \frac{2}{\int_0^R (v T_0 -k) \d{r}}, \,\, \,\,
  b =  \frac{a^2}{2} \int_{-R}^R (2k-vT_0)T_0 \d{r}
  \,+\, a (T_0(-R) +T_0(R) )/2.
  \end{equation}
\subsection{Locally heated pipe \& non-equilibrated case $Q\neq0$}
\label{concentric_cylinder_case}
We illustrate the use of explicit computation
of the eigenmode decomposition,
through the recursive relations  (\ref{eq:cf-1}) and (\ref{eq:cf-2}),
in a simple and classical configuration: sometimes referred to as
'generalized Graetz' configuration. Two concentric cylinders are thus considered. A central one, for which $r \in [0,r_0]$ and   whereby the  fluid convects the temperature, and  an external one,  $r \in [r_0,R]$ where temperature 
conduction occurs.
The   dimensionless axi-symmetric longitudinal velocity $v(r)$  inside the inner cylinder is chosen such as $v(r)={\rm Pe}(1-(r/r_0)^2)$, where $\Pe$ is the P\'eclet number which quantifies the ratio between convection and diffusion.  
The domain dimensions are $r_0=1$ and $R=2$.
The conductivity is set to $k=1$.
The solution is defined up to an additive constant that is fixed by setting $T_{-\infty }=0$.
A Neumann boundary condition $k\nabla  T = g(z)$ is set.
The applied boundary condition is  chosen so as to present a localyzed (and regular) heat flux  nearby the origin, with $z_0=1/2$ here:
\begin{equation}
  \label{fz}
  g(z)=1-\cos(2\pi (z-z_0)) ) \quad \text{for} \quad
  z \in [z_0-1/2, z_0+1/2],
\end{equation}
and $g(z)=0$ otherwise.
With these conditions, a simple balance on the domain allows to compute $T_{+\infty }$:
\begin{displaymath}
  T_{+\infty } = 
  \dfrac{
    2\pi R \int_{-\infty }^{+\infty }  g(z)\d{z}}
  {
    2\pi\int_0^{r_0} v(r) r \d{r}
  }
  =\dfrac{4R}{{\rm Pe} \,r_0} \int_{0 }^{1 }g(z)\d{z},
\end{displaymath}
so that $T_{+\infty }=8/{\rm Pe}$ here. Using  Neumann boundary condition (\ref{fz}) and equation (\ref{eq:solDC})
one is able to provide a mesh-less explicit analytical solution for the temperature, illustrated 
in Figure \ref{fig:appli-axi-profil-T} for various values of  $\Pe$ varying between $100$ to $0.1$ so as to show-case the drastic effect of convection on the temperature profiles.
 Figure \ref{fig:appli-axi-profil-T}a exemplifies that, when convection dominates in the centerline $r=0$,  the effect of the  heat source nearby the origin  is weak.  The local temperature is almost zero at $r=0$ for  $z \in [-1,0]$, since the prescribed temperature at $z \rightarrow -\infty $ is zero. Nevertheless, a slight tilt of the centerline temperature profile is noticeable  as $z>0$ so that  it  barely reaches the non-zero asymptotic 
downstream constant temperature  $T_{+\infty }$  at $z=10$. On the contrary to the centerline profile, the wall profile at $r=R$ displays a strong deflection with a maximum located at the heat source maximum  $z =1/2$, and both upstream and downstream decay from this maximum. The typical downstream decay length  is related to the convection ability to transport the heat flux downstream. Hence  the larger the P\'eclet, the longer the downstream decay length.  The upstream decay length, on the contrary both depends on the solid conduction and the wall radius. In the case of small solid walls thickness, some asymptotic behavior have been documented \cite{charles_ijhmt}. 
The other radially intermediate temperature profiles shown in Figure \ref{fig:appli-axi-profil-T}a display a medium  behavior between the centerline and the wall profile. 
The closer to the outer cylinder wall, the closer the temperature peak to the wall profile.  
Figures \ref{fig:appli-axi-profil-T}b,  \ref{fig:appli-axi-profil-T}c and
\ref{fig:appli-axi-profil-T}d 
display the effect of decreasing the convection on the temperature profile.
\begin{figure}[]
~~~~~~~~~~~~~~~~
\begingroup
\footnotesize
  \makeatletter
  \providecommand\color[2][]{%
    \GenericError{(gnuplot) \space\space\space\@spaces}{%
      Package color not loaded in conjunction with
      terminal option `colourtext'%
    }{See the gnuplot documentation for explanation.%
    }{Either use 'blacktext' in gnuplot or load the package
      color.sty in LaTeX.}%
    \renewcommand\color[2][]{}%
  }%
  \providecommand\includegraphics[2][]{%
    \GenericError{(gnuplot) \space\space\space\@spaces}{%
      Package graphicx or graphics not loaded%
    }{See the gnuplot documentation for explanation.%
    }{The gnuplot epslatex terminal needs graphicx.sty or graphics.sty.}%
    \renewcommand\includegraphics[2][]{}%
  }%
  \providecommand\rotatebox[2]{#2}%
  \@ifundefined{ifGPcolor}{%
    \newif\ifGPcolor
    \GPcolorfalse
  }{}%
  \@ifundefined{ifGPblacktext}{%
    \newif\ifGPblacktext
    \GPblacktexttrue
  }{}%
  \let\gplgaddtomacro\g@addto@macro
  \gdef\gplbacktext{}%
  \gdef\gplfronttext{}%
  \makeatother
  \ifGPblacktext
    \def\colorrgb#1{}%
    \def\colorgray#1{}%
  \else
    \ifGPcolor
      \def\colorrgb#1{\color[rgb]{#1}}%
      \def\colorgray#1{\color[gray]{#1}}%
      \expandafter\def\csname LTw\endcsname{\color{white}}%
      \expandafter\def\csname LTb\endcsname{\color{black}}%
      \expandafter\def\csname LTa\endcsname{\color{black}}%
      \expandafter\def\csname LT0\endcsname{\color[rgb]{1,0,0}}%
      \expandafter\def\csname LT1\endcsname{\color[rgb]{0,1,0}}%
      \expandafter\def\csname LT2\endcsname{\color[rgb]{0,0,1}}%
      \expandafter\def\csname LT3\endcsname{\color[rgb]{1,0,1}}%
      \expandafter\def\csname LT4\endcsname{\color[rgb]{0,1,1}}%
      \expandafter\def\csname LT5\endcsname{\color[rgb]{1,1,0}}%
      \expandafter\def\csname LT6\endcsname{\color[rgb]{0,0,0}}%
      \expandafter\def\csname LT7\endcsname{\color[rgb]{1,0.3,0}}%
      \expandafter\def\csname LT8\endcsname{\color[rgb]{0.5,0.5,0.5}}%
    \else
      \def\colorrgb#1{\color{black}}%
      \def\colorgray#1{\color[gray]{#1}}%
      \expandafter\def\csname LTw\endcsname{\color{white}}%
      \expandafter\def\csname LTb\endcsname{\color{black}}%
      \expandafter\def\csname LTa\endcsname{\color{black}}%
      \expandafter\def\csname LT0\endcsname{\color{black}}%
      \expandafter\def\csname LT1\endcsname{\color{black}}%
      \expandafter\def\csname LT2\endcsname{\color{black}}%
      \expandafter\def\csname LT3\endcsname{\color{black}}%
      \expandafter\def\csname LT4\endcsname{\color{black}}%
      \expandafter\def\csname LT5\endcsname{\color{black}}%
      \expandafter\def\csname LT6\endcsname{\color{black}}%
      \expandafter\def\csname LT7\endcsname{\color{black}}%
      \expandafter\def\csname LT8\endcsname{\color{black}}%
    \fi
  \fi
    \setlength{\unitlength}{0.0500bp}%
    \ifx\gptboxheight\undefined%
      \newlength{\gptboxheight}%
      \newlength{\gptboxwidth}%
      \newsavebox{\gptboxtext}%
    \fi%
    \setlength{\fboxrule}{0.5pt}%
    \setlength{\fboxsep}{1pt}%
\begin{picture}(6236.00,11338.00)%
      \csname LTb\endcsname
      \put(3118,11218){\makebox(0,0){\normalsize Temperature profiles inside a cylinder}}%
    \gplgaddtomacro\gplbacktext{%
      \csname LTb\endcsname
      \put(-132,8778){\makebox(0,0)[r]{\strut{}$0$}}%
      \put(-132,9095){\makebox(0,0)[r]{\strut{}$0.2$}}%
      \put(-132,9411){\makebox(0,0)[r]{\strut{}$0.4$}}%
      \put(-132,9728){\makebox(0,0)[r]{\strut{}$0.6$}}%
      \put(-132,10044){\makebox(0,0)[r]{\strut{}$0.8$}}%
      \put(-132,10361){\makebox(0,0)[r]{\strut{}$1$}}%
      \put(-132,10677){\makebox(0,0)[r]{\strut{}$1.2$}}%
      \put(624,8558){\makebox(0,0){\strut{}$-4$}}%
      \put(1871,8558){\makebox(0,0){\strut{}$-2$}}%
      \put(3118,8558){\makebox(0,0){\strut{}$0$}}%
      \put(4365,8558){\makebox(0,0){\strut{}$2$}}%
      \put(5612,8558){\makebox(0,0){\strut{}$4$}}%
      \csname LTb\endcsname
      \put(3118,10897){\makebox(0,0){\strut{}}}%
      \csname LTb\endcsname
      \put(3741,10897){\makebox(0,0){\strut{}}}%
      \put(624,9015){\makebox(0,0)[l]{\strut{}$T_{+\infty} = 0.08$}}%
    }%
    \gplgaddtomacro\gplfronttext{%
      \csname LTb\endcsname
      \put(-957,9727){\makebox(0,0){\normalsize $T$}}%
      \put(3117,8228){\makebox(0,0){\normalsize $z$}}%
      \put(477,10897){\makebox(0,0){\strut{}(a) Pe=100}}%
      \csname LTb\endcsname
      \put(5248,10504){\makebox(0,0)[r]{\strut{}$r=R$}}%
      \csname LTb\endcsname
      \put(5248,10284){\makebox(0,0)[r]{\strut{}$r=(R+r_0)/2$}}%
      \csname LTb\endcsname
      \put(5248,10064){\makebox(0,0)[r]{\strut{}$r=r_0$}}%
      \csname LTb\endcsname
      \put(5248,9844){\makebox(0,0)[r]{\strut{}$r=r_0/2$}}%
      \csname LTb\endcsname
      \put(5248,9624){\makebox(0,0)[r]{\strut{}$r = 0$}}%
    }%
    \gplgaddtomacro\gplbacktext{%
      \csname LTb\endcsname
      \put(-132,5999){\makebox(0,0)[r]{\strut{}$0$}}%
      \put(-132,6474){\makebox(0,0)[r]{\strut{}$0.5$}}%
      \put(-132,6949){\makebox(0,0)[r]{\strut{}$1$}}%
      \put(-132,7423){\makebox(0,0)[r]{\strut{}$1.5$}}%
      \put(-132,7898){\makebox(0,0)[r]{\strut{}$2$}}%
      \put(624,5779){\makebox(0,0){\strut{}$-4$}}%
      \put(1871,5779){\makebox(0,0){\strut{}$-2$}}%
      \put(3118,5779){\makebox(0,0){\strut{}$0$}}%
      \put(4365,5779){\makebox(0,0){\strut{}$2$}}%
      \put(5612,5779){\makebox(0,0){\strut{}$4$}}%
      \csname LTb\endcsname
      \put(3118,8118){\makebox(0,0){\strut{}}}%
      \csname LTb\endcsname
      \put(3741,8118){\makebox(0,0){\strut{}}}%
      \put(624,6854){\makebox(0,0)[l]{\strut{}$T_{+\infty} = 0.8$}}%
    }%
    \gplgaddtomacro\gplfronttext{%
      \csname LTb\endcsname
      \put(-957,6948){\makebox(0,0){\normalsize $T$}}%
      \put(3117,5449){\makebox(0,0){\normalsize $z$}}%
      \put(477,8118){\makebox(0,0){\strut{}(b) Pe=10}}%
    }%
    \gplgaddtomacro\gplbacktext{%
      \csname LTb\endcsname
      \put(-132,3219){\makebox(0,0)[r]{\strut{}$4$}}%
      \put(-132,3852){\makebox(0,0)[r]{\strut{}$6$}}%
      \put(-132,4486){\makebox(0,0)[r]{\strut{}$8$}}%
      \put(-132,5119){\makebox(0,0)[r]{\strut{}$10$}}%
      \put(624,2999){\makebox(0,0){\strut{}$-4$}}%
      \put(1871,2999){\makebox(0,0){\strut{}$-2$}}%
      \put(3118,2999){\makebox(0,0){\strut{}$0$}}%
      \put(4365,2999){\makebox(0,0){\strut{}$2$}}%
      \put(5612,2999){\makebox(0,0){\strut{}$4$}}%
      \csname LTb\endcsname
      \put(3118,5339){\makebox(0,0){\strut{}}}%
      \csname LTb\endcsname
      \put(3741,5339){\makebox(0,0){\strut{}}}%
      \put(624,4644){\makebox(0,0)[l]{\strut{}$T_{+\infty} = 8$}}%
    }%
    \gplgaddtomacro\gplfronttext{%
      \csname LTb\endcsname
      \put(-825,4169){\makebox(0,0){\normalsize $T$}}%
      \put(3117,2669){\makebox(0,0){\normalsize $z$}}%
      \put(477,5339){\makebox(0,0){\strut{}(c) Pe=1}}%
    }%
    \gplgaddtomacro\gplbacktext{%
      \csname LTb\endcsname
      \put(-132,711){\makebox(0,0)[r]{\strut{}$75$}}%
      \put(-132,1390){\makebox(0,0)[r]{\strut{}$77.5$}}%
      \put(-132,2068){\makebox(0,0)[r]{\strut{}$80$}}%
      \put(624,220){\makebox(0,0){\strut{}$-4$}}%
      \put(1871,220){\makebox(0,0){\strut{}$-2$}}%
      \put(3118,220){\makebox(0,0){\strut{}$0$}}%
      \put(4365,220){\makebox(0,0){\strut{}$2$}}%
      \put(5612,220){\makebox(0,0){\strut{}$4$}}%
      \csname LTb\endcsname
      \put(3118,2559){\makebox(0,0){\strut{}}}%
      \csname LTb\endcsname
      \put(3741,2559){\makebox(0,0){\strut{}}}%
      \put(624,2203){\makebox(0,0)[l]{\strut{}$T_{+\infty} = 80$}}%
    }%
    \gplgaddtomacro\gplfronttext{%
      \csname LTb\endcsname
      \put(-1089,1389){\makebox(0,0){\normalsize $T$}}%
      \put(3117,-110){\makebox(0,0){\normalsize $z$}}%
      \put(477,2559){\makebox(0,0){\strut{}(d) Pe=0.1}}%
    }%
    \gplbacktext
    \put(0,0){\includegraphics{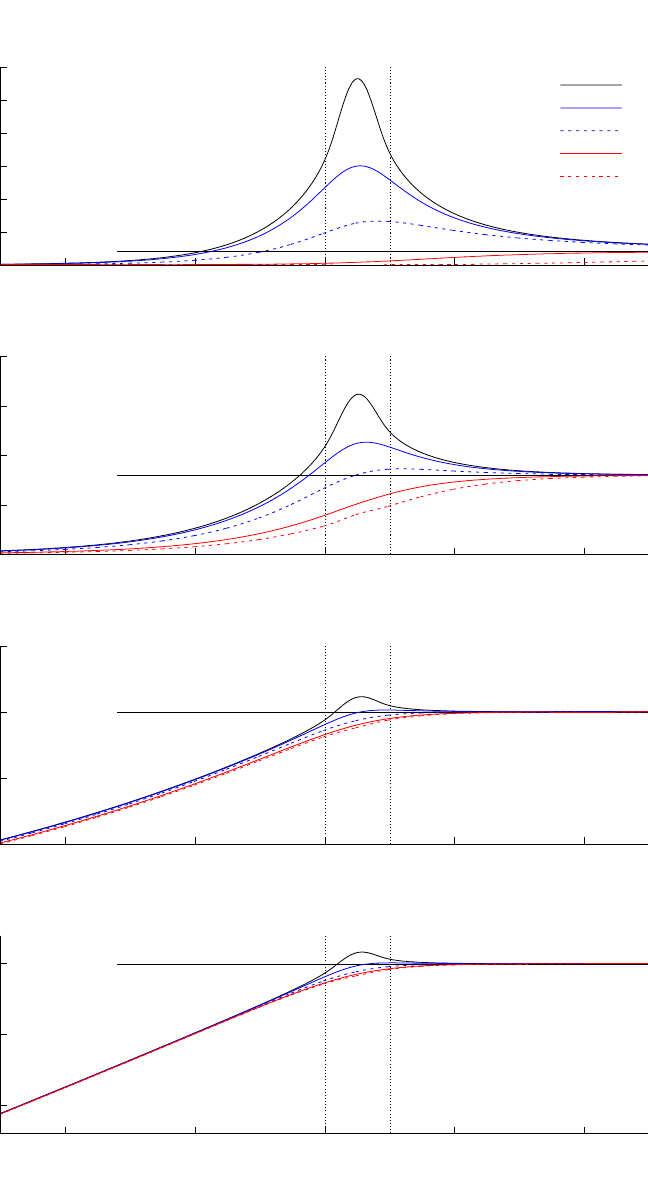}}%
    \gplfronttext
  \end{picture}%
\endgroup
 \caption{Temperature profiles at various radial distances from center $r=0$ to solid edge $r=R$ and for  various P\'eclet numbers.    An identical scaling in $z$ has been set to focus on the heated region (dashed vertical lines). Away from the heated region, the temperature      exponentially goes to $T_{-\infty }=0$ when $z\rightarrow -\infty $ and to $T_{+\infty }$ as $z\rightarrow +\infty $. }
   \label{fig:appli-axi-profil-T}
\end{figure}

From one hand, these profiles display smoother and smaller peaks at the heat source as convective effects are weakened. On the other hand, the profiles are increasingly non-symetric 
at smaller P\'eclet numbers, with an increasing downstream temperature $T_{+\infty }=8/\Pe$.

\subsection{Parallel configuration \&  equilibrated case $Q=0$}
\label{parallel_plane_equilibrated_case}
Here we consider a parallel planar geometry in a double-pass configuration for  which the upper fluid is re-injected into the lower  one at one end as in \cite{HO_YEH_05}.
An exchanger with total thickness $2R$ is considered.
A fluid is flowing for $|x|\le x_0$ surrounded by solid walls for $x_0\le |x|\le R$.
We consider  the zero total flux for which the upper  fluid is convected along $+z$ direction for $x \in [0,x_0]$, and on the opposite one for $x \in [-x_0,0]$. Within $[-x_0,x_0]$, the velocity profile reads
  
\begin{equation}
  \label{v_profile} 
  v(x) = 6\,\sigma{(x)}\, \Pe \,\frac{|x|}{x_0}
  \left (1- \frac{|x|}{x_0}\right ),
\end{equation}
$\sigma{(x)} $ being the sign of $x$,
with P\'eclet number $\Pe=\bar{v}x_0/D$
(built from the average velocity
$\bar{v}= \int_0^{x_0}v\,\dx/x_0 $, $x_0$ the fluid channel half-gap
and  the diffusivity $D$).
At $x=\pm R$, adiabatic conditions are prescribed, (i.e $\nabla T \cdot  {\bf n} \vert_{R}  =  0$) for  $ |z|> 1/2$ whereas the flux (\ref{fz})
(with $z_o=0$ here) is imposed for $z\in [-1/2,1/2]$.
In this case the adiabatic kernel $T_0$ solution of
\eqref{T_0kernel} is given by: for $|x| \le x_0$
\begin{equation}
  \label{T_0_parallel} 
   T_0(x) = -\sigma(x) \Pe\frac{x}{2x_0^2}  
   \left (x^3 -2x^2x_0 +2 x_0^3
   \right )-\sigma(x)\Pe \frac{x_0^2}{2},  
\end{equation}
whereas for $|x| \ge x_0$
\begin{equation}
  \label{T_0_parallel_bis} 
  T_0(x)=-\sigma(x)\Pe x_0^2/2.
\end{equation}
The two constants $a$ and $b$ defined in (\ref{ab_parallel}) read
\begin{equation}
  \label{ab_parallel_sol}
 a=-\frac{35}{13Pe^2x_0^3+35R},  \,\, \,\, b =0.
\end{equation}
Figure \ref{fig:profil-T-parallel} illustrates the temperature profiles
along the longitudinal direction
$z$ at various transverse heights $x$, either in the center of the channel ($x=0$), at the interface between
the liquid and the solid  ($x=x_0$) or at the solid exterior edge  ($x=R$).
One can observe that the ``hot-spot'' temperature located very close at $z=0$  at the surface $x=R$, is
weakly affected by the increase of the P\'eclet number expect for small P\'eclet (where one has to
translate back the reference temperature  chosen at $\pm \infty$, so as to obtain true  physical
``hot-spot'' temperature). Nevertheless, further-down inside the solid the temperature rise is  weakened
by increasing fluid convection, as expected. Also, convection drops down the outlet temperature, as
expected from heat-flux balance argument. 
\begin{figure}[]
~~~~~~~~~~~~~~~~
\begingroup
\footnotesize
  \makeatletter
  \providecommand\color[2][]{%
    \GenericError{(gnuplot) \space\space\space\@spaces}{%
      Package color not loaded in conjunction with
      terminal option `colourtext'%
    }{See the gnuplot documentation for explanation.%
    }{Either use 'blacktext' in gnuplot or load the package
      color.sty in LaTeX.}%
    \renewcommand\color[2][]{}%
  }%
  \providecommand\includegraphics[2][]{%
    \GenericError{(gnuplot) \space\space\space\@spaces}{%
      Package graphicx or graphics not loaded%
    }{See the gnuplot documentation for explanation.%
    }{The gnuplot epslatex terminal needs graphicx.sty or graphics.sty.}%
    \renewcommand\includegraphics[2][]{}%
  }%
  \providecommand\rotatebox[2]{#2}%
  \@ifundefined{ifGPcolor}{%
    \newif\ifGPcolor
    \GPcolorfalse
  }{}%
  \@ifundefined{ifGPblacktext}{%
    \newif\ifGPblacktext
    \GPblacktexttrue
  }{}%
  \let\gplgaddtomacro\g@addto@macro
  \gdef\gplbacktext{}%
  \gdef\gplfronttext{}%
  \makeatother
  \ifGPblacktext
    \def\colorrgb#1{}%
    \def\colorgray#1{}%
  \else
    \ifGPcolor
      \def\colorrgb#1{\color[rgb]{#1}}%
      \def\colorgray#1{\color[gray]{#1}}%
      \expandafter\def\csname LTw\endcsname{\color{white}}%
      \expandafter\def\csname LTb\endcsname{\color{black}}%
      \expandafter\def\csname LTa\endcsname{\color{black}}%
      \expandafter\def\csname LT0\endcsname{\color[rgb]{1,0,0}}%
      \expandafter\def\csname LT1\endcsname{\color[rgb]{0,1,0}}%
      \expandafter\def\csname LT2\endcsname{\color[rgb]{0,0,1}}%
      \expandafter\def\csname LT3\endcsname{\color[rgb]{1,0,1}}%
      \expandafter\def\csname LT4\endcsname{\color[rgb]{0,1,1}}%
      \expandafter\def\csname LT5\endcsname{\color[rgb]{1,1,0}}%
      \expandafter\def\csname LT6\endcsname{\color[rgb]{0,0,0}}%
      \expandafter\def\csname LT7\endcsname{\color[rgb]{1,0.3,0}}%
      \expandafter\def\csname LT8\endcsname{\color[rgb]{0.5,0.5,0.5}}%
    \else
      \def\colorrgb#1{\color{black}}%
      \def\colorgray#1{\color[gray]{#1}}%
      \expandafter\def\csname LTw\endcsname{\color{white}}%
      \expandafter\def\csname LTb\endcsname{\color{black}}%
      \expandafter\def\csname LTa\endcsname{\color{black}}%
      \expandafter\def\csname LT0\endcsname{\color{black}}%
      \expandafter\def\csname LT1\endcsname{\color{black}}%
      \expandafter\def\csname LT2\endcsname{\color{black}}%
      \expandafter\def\csname LT3\endcsname{\color{black}}%
      \expandafter\def\csname LT4\endcsname{\color{black}}%
      \expandafter\def\csname LT5\endcsname{\color{black}}%
      \expandafter\def\csname LT6\endcsname{\color{black}}%
      \expandafter\def\csname LT7\endcsname{\color{black}}%
      \expandafter\def\csname LT8\endcsname{\color{black}}%
    \fi
  \fi
    \setlength{\unitlength}{0.0500bp}%
    \ifx\gptboxheight\undefined%
      \newlength{\gptboxheight}%
      \newlength{\gptboxwidth}%
      \newsavebox{\gptboxtext}%
    \fi%
    \setlength{\fboxrule}{0.5pt}%
    \setlength{\fboxsep}{1pt}%
\begin{picture}(6236.00,11338.00)%
      \csname LTb\endcsname
      \put(3118,11418){\makebox(0,0){\normalsize Temperature profiles inside a parallel channel}}%
    \gplgaddtomacro\gplbacktext{%
      \csname LTb\endcsname
      \put(-132,8778){\makebox(0,0)[r]{\strut{}$-0.2$}}%
      \put(-132,9095){\makebox(0,0)[r]{\strut{}$0$}}%
      \put(-132,9411){\makebox(0,0)[r]{\strut{}$0.2$}}%
      \put(-132,9728){\makebox(0,0)[r]{\strut{}$0.4$}}%
      \put(-132,10044){\makebox(0,0)[r]{\strut{}$0.6$}}%
      \put(-132,10361){\makebox(0,0)[r]{\strut{}$0.8$}}%
      \put(-132,10677){\makebox(0,0)[r]{\strut{}$1$}}%
      \put(0,8558){\makebox(0,0){\strut{}$-3$}}%
      \put(1039,8558){\makebox(0,0){\strut{}$-2$}}%
      \put(2078,8558){\makebox(0,0){\strut{}$-1$}}%
      \put(3118,8558){\makebox(0,0){\strut{}$0$}}%
      \put(4157,8558){\makebox(0,0){\strut{}$1$}}%
      \put(5196,8558){\makebox(0,0){\strut{}$2$}}%
      \put(6235,8558){\makebox(0,0){\strut{}$3$}}%
      \csname LTb\endcsname
      \put(2598,10897){\makebox(0,0){\strut{}}}%
      \csname LTb\endcsname
      \put(3637,10897){\makebox(0,0){\strut{}}}%
    }%
    \gplgaddtomacro\gplfronttext{%
      \csname LTb\endcsname
      \put(-1089,9727){\makebox(0,0){\normalsize $T$}}%
      \put(3117,8228){\makebox(0,0){\normalsize $z$}}%
      \put(477,10897){\makebox(0,0){\strut{}(a) Pe=50}}%
      \csname LTb\endcsname
      \put(5248,10504){\makebox(0,0)[r]{\strut{}$x=R$}}%
      \csname LTb\endcsname
      \put(5248,10284){\makebox(0,0)[r]{\strut{}$x=x_0$}}%
      \csname LTb\endcsname
      \put(5248,10064){\makebox(0,0)[r]{\strut{}$x= 0$}}%
    }%
    \gplgaddtomacro\gplbacktext{%
      \csname LTb\endcsname
      \put(-132,5999){\makebox(0,0)[r]{\strut{}$-0.2$}}%
      \put(-132,6316){\makebox(0,0)[r]{\strut{}$0$}}%
      \put(-132,6632){\makebox(0,0)[r]{\strut{}$0.2$}}%
      \put(-132,6949){\makebox(0,0)[r]{\strut{}$0.4$}}%
      \put(-132,7265){\makebox(0,0)[r]{\strut{}$0.6$}}%
      \put(-132,7582){\makebox(0,0)[r]{\strut{}$0.8$}}%
      \put(-132,7898){\makebox(0,0)[r]{\strut{}$1$}}%
      \put(0,5779){\makebox(0,0){\strut{}$-3$}}%
      \put(1039,5779){\makebox(0,0){\strut{}$-2$}}%
      \put(2078,5779){\makebox(0,0){\strut{}$-1$}}%
      \put(3118,5779){\makebox(0,0){\strut{}$0$}}%
      \put(4157,5779){\makebox(0,0){\strut{}$1$}}%
      \put(5196,5779){\makebox(0,0){\strut{}$2$}}%
      \put(6235,5779){\makebox(0,0){\strut{}$3$}}%
      \csname LTb\endcsname
      \put(2598,8118){\makebox(0,0){\strut{}}}%
      \csname LTb\endcsname
      \put(3637,8118){\makebox(0,0){\strut{}}}%
    }%
    \gplgaddtomacro\gplfronttext{%
      \csname LTb\endcsname
      \put(-1089,6948){\makebox(0,0){\normalsize $T$}}%
      \put(3117,5449){\makebox(0,0){\normalsize $z$}}%
      \put(477,8118){\makebox(0,0){\strut{}(b) Pe=10}}%
    }%
    \gplgaddtomacro\gplbacktext{%
      \csname LTb\endcsname
      \put(-132,3219){\makebox(0,0)[r]{\strut{}$-0.8$}}%
      \put(-132,3457){\makebox(0,0)[r]{\strut{}$-0.6$}}%
      \put(-132,3694){\makebox(0,0)[r]{\strut{}$-0.4$}}%
      \put(-132,3932){\makebox(0,0)[r]{\strut{}$-0.2$}}%
      \put(-132,4169){\makebox(0,0)[r]{\strut{}$0$}}%
      \put(-132,4407){\makebox(0,0)[r]{\strut{}$0.2$}}%
      \put(-132,4644){\makebox(0,0)[r]{\strut{}$0.4$}}%
      \put(-132,4882){\makebox(0,0)[r]{\strut{}$0.6$}}%
      \put(-132,5119){\makebox(0,0)[r]{\strut{}$0.8$}}%
      \put(0,2999){\makebox(0,0){\strut{}$-3$}}%
      \put(1039,2999){\makebox(0,0){\strut{}$-2$}}%
      \put(2078,2999){\makebox(0,0){\strut{}$-1$}}%
      \put(3118,2999){\makebox(0,0){\strut{}$0$}}%
      \put(4157,2999){\makebox(0,0){\strut{}$1$}}%
      \put(5196,2999){\makebox(0,0){\strut{}$2$}}%
      \put(6235,2999){\makebox(0,0){\strut{}$3$}}%
      \csname LTb\endcsname
      \put(2598,5339){\makebox(0,0){\strut{}}}%
      \csname LTb\endcsname
      \put(3637,5339){\makebox(0,0){\strut{}}}%
    }%
    \gplgaddtomacro\gplfronttext{%
      \csname LTb\endcsname
      \put(-1089,4169){\makebox(0,0){\normalsize $T$}}%
      \put(3117,2669){\makebox(0,0){\normalsize $z$}}%
      \put(477,5339){\makebox(0,0){\strut{}(c) Pe=1}}%
    }%
    \gplgaddtomacro\gplbacktext{%
      \csname LTb\endcsname
      \put(-132,440){\makebox(0,0)[r]{\strut{}$-1$}}%
      \put(-132,677){\makebox(0,0)[r]{\strut{}$-0.8$}}%
      \put(-132,915){\makebox(0,0)[r]{\strut{}$-0.6$}}%
      \put(-132,1152){\makebox(0,0)[r]{\strut{}$-0.4$}}%
      \put(-132,1390){\makebox(0,0)[r]{\strut{}$-0.2$}}%
      \put(-132,1627){\makebox(0,0)[r]{\strut{}$0$}}%
      \put(-132,1864){\makebox(0,0)[r]{\strut{}$0.2$}}%
      \put(-132,2102){\makebox(0,0)[r]{\strut{}$0.4$}}%
      \put(-132,2339){\makebox(0,0)[r]{\strut{}$0.6$}}%
      \put(0,220){\makebox(0,0){\strut{}$-3$}}%
      \put(1039,220){\makebox(0,0){\strut{}$-2$}}%
      \put(2078,220){\makebox(0,0){\strut{}$-1$}}%
      \put(3118,220){\makebox(0,0){\strut{}$0$}}%
      \put(4157,220){\makebox(0,0){\strut{}$1$}}%
      \put(5196,220){\makebox(0,0){\strut{}$2$}}%
      \put(6235,220){\makebox(0,0){\strut{}$3$}}%
      \csname LTb\endcsname
      \put(2598,2559){\makebox(0,0){\strut{}}}%
      \csname LTb\endcsname
      \put(3637,2559){\makebox(0,0){\strut{}}}%
    }%
    \gplgaddtomacro\gplfronttext{%
      \csname LTb\endcsname
      \put(-1089,1389){\makebox(0,0){\normalsize $T$}}%
      \put(3117,-110){\makebox(0,0){\normalsize $z$}}%
      \put(477,2559){\makebox(0,0){\strut{}(d) Pe=0.1}}%
    }%
    \gplbacktext
    \put(0,0){\includegraphics{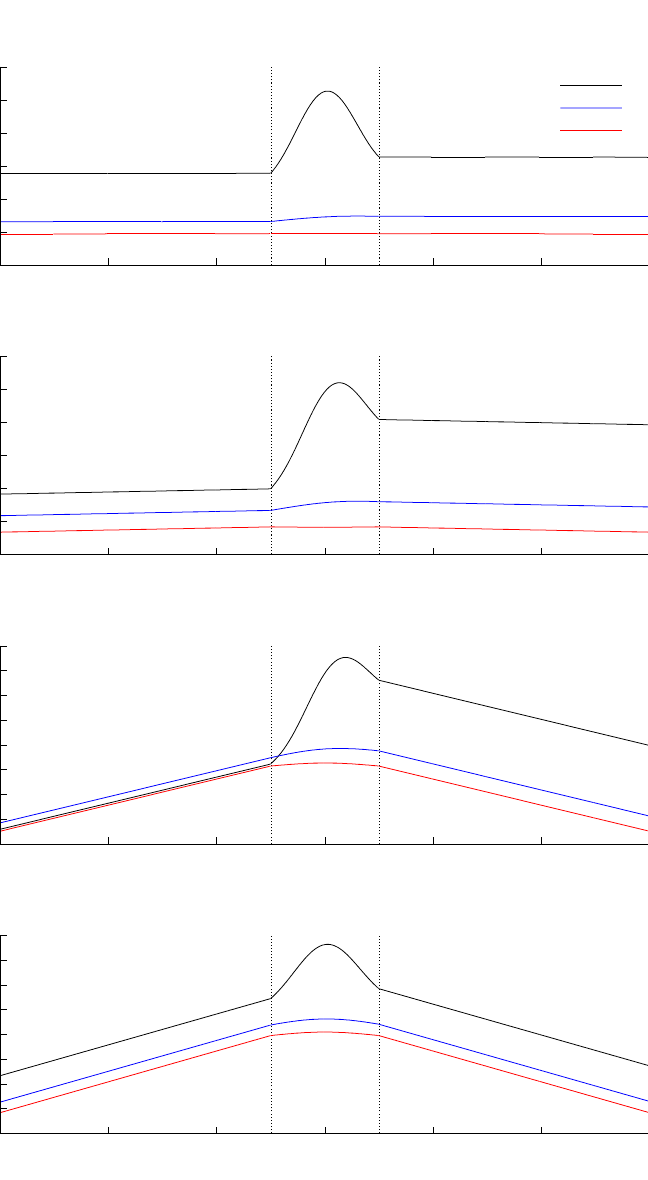}}%
    \gplfronttext
  \end{picture}%
\endgroup
  \caption{\footnotesize  Temperature profile  inside a parallel channel with counter-current  flow (\ref{v_profile}) along $z$. A  heat source term  (\ref{fz}) is located within $z\in [-1/2,1/2]$ (dotted lines).
}
   \label{fig:profil-T-parallel}
\end{figure}

\section{Conclusion}
This contribution has provided the mathematical proof, as well as  the effective algorithmic framework
for the computation of generalized Graetz mode decomposition in cylindrical or parallel configurations.
We have shown that, in these special configurations, the  Graetz functions  analyticity  enables mesh-less
explicit computation of the steady-state temperature  even when  boundary condition with
source terms are considered.  The method has been illustrated in two complementary cases
(cylindrical/non equilibrate and parallel/equilibrated) in order to showcase its various aspects. \\


\section{Appendix}

\subsection{Cylindrical heated pipe case $Q\neq0$}
The solution  provided by (\ref{eq:solDC}) is 
\begin{equation}
  \label{eq:solDCcylder}
  T(r,z) = 
  \frac{8}{\Pe}G(z)+\sum_{i\in \Z^\star} \alpha_i c_i(z) T_i(r)\, e^{\lambda_i z},
\end{equation}
Rewritting (\ref{fz}) as
\begin{equation*}
  g(z)=H(z)H(1-z)\left( 1-\cos(2\pi (z+\frac{1}{2})) \right)
\end{equation*}
with  $H(z)$ the Heaviside function, and using integration by parts leads to 
the primitive $G(z)=\int_{-\infty}^{z} g(z')dz'$ equals to
\begin{equation*}
  G(z)=H(z)H(1-z)\left(z-\frac{1}{2 \pi}\sin(2\pi (z+\frac{1}{2}))\right)
  \,.
\end{equation*}
The function $c_{i}(z)$ in (\ref{eq:solDCcylder}) are given by   (\ref{eq:cn-N-def}), the integration by part of which gives
\begin{equation*}
  c_{+i}(z)= g(z)\frac{e^{-\lambda_{i} z}}{\lambda_i}+\frac{H(1-z)}{\lambda_i^2+4\pi^2}\left(\frac{\lambda}{2\pi}\sin(2\pi z) +\cos(2\pi z) \right).
\end{equation*}
  The  eigenfunctions $T_i$ are provided by the $\lambda$-analytical decomposition  (\ref{T_analytical_lambda})
  upon functions $t_p(r)$ such that 
\begin{equation}
  \label{T_analytical_lambda_bis}
  T_i(r)  = \sum_{p=0}^{N_p} t_p(r) \lambda_i^p,
\end{equation}
where each eigenvalue $\lambda_i$ of the discrete spectrum sets its eigenfunctions $T_i$
from (\ref{T_analytical_lambda_bis}). We hereby provide the  first three elements of both dowstream and upstream  spectrum computed with a finite truncation of $N_p=20$ in (\ref{T_analytical_lambda_bis}) and parameter $Pe=1$,  with a formal calculus Maple software: $\lambda_1 = 0.674240$, $\lambda_2 =3.306258 $, $\lambda_3 =4.936416 $, $\lambda_{-1} =0 $, $\lambda_{-2} =-1.027741$, $\lambda_{-3} =-2.35726 $. Function $t_p(r)$, $p\in \{0,5\}$ are also hereby given by the following piecewise continuous analytical functions of $r$ along the  fluid-solid domains $ r \in [0,1] \cup[1,2]$
\begin{equation*}
  \left\{ 
  \begin{aligned}
 & {\scriptstyle r \in [0,1]}   & t_{0}= 1                              \\
 & {\scriptstyle r \in [1,2]}  &  t_{0}= 1 
  \end{aligned}
  \right.
\end{equation*}
\begin{equation*}
  \left\{ 
  \begin{aligned}
 & {\scriptstyle r \in [0,1]}   & t_{1}= &  \,-\frac{5}{8} \,{r}^{4}+5/2\,{r}^{2} \\
 & {\scriptstyle r \in [1,2]}  &  t_{1}= &  \frac{15}{8}+\frac{5}{2}\,\ln  \left( r \right)  
  \end{aligned}
  \right.
\end{equation*}
\begin{equation*}
  \left\{ 
  \begin{aligned}
 & {\scriptstyle r \in [0,1]}   & t_{2}=  & \,-\frac{1}{4}\,{r}^{2}+\frac {25\,{r}^{4}}{16}-\frac {125\,{r}^{6}}{144}+\frac {25\,{r}^{8}}{256} \\
 & {\scriptstyle r \in [1,2]}  &  t_{2}=  &  \frac{1825}{2304}-\frac{\,{r}^{2}}{4}+{\frac {175\,\ln  \left( r \right) }{96}}  
  \end{aligned}
  \right.
\end{equation*}
\begin{equation}
  \left\{ 
  \begin{aligned}
    & {\scriptstyle r \in [0,1]}   & {\scriptstyle t_{3}} &   =  {\scriptstyle -\frac{5\,{r}^{4}}{16}+\frac{25\,{r}^{6}}{48}-\frac{875\,{r}^{8}}{2304} +\frac{445\,{r}^{10}}{4608}-\frac{125\,{r}^{12}}{18432}       }       \\
 & {\scriptstyle r \in [1,2]}   & {\scriptstyle  t_{3}  } & =  {\scriptstyle - \frac{4385}{18432}+\frac{5\,{r}^{2}}{32}+\frac{155\,\ln  \left( r \right) }{4608} -\frac{5}{8}\,{r}^{2}\ln  \left( r \right) }
  \end{aligned}
  \right.
\end{equation}

\begin{equation*}
  \left\{ 
  \begin{aligned}
 & {\scriptstyle r \in [0,1]}   & {\scriptstyle  t_{4} } &  ={\scriptstyle  \frac{{r}^{4}}{64}-\frac{25\,{r}^{6}}{192} +\frac{1325\,{r}^{8}}{9216}-\frac{839\,{r}^{10}}{9216}+\frac{10975\,{r}^{12}}{331776}-\frac{3175\,{r}^{14}}{602112}+\frac{625\,{r}^{16}}{2359296}}\\
 & {\scriptstyle r \in [1,2]}  & {\scriptstyle  t_{4}  } & ={\scriptstyle -\frac{319528919}{1040449536}
+\frac {2375\,{r}^{2}}{9216}-\frac {847715\,\ln  \left( r \right) }{3096576}+\frac {{r}^{4}}{64}-\frac {175\,{r}^{2}\ln  \left( r \right) }{384}  }
  \end{aligned}
  \right.
\end{equation*}
\begin{equation*}
  \left\{ 
  \begin{aligned}
 & {\scriptstyle r \in [0,1]}   & {\scriptstyle t_{5}} & = {\scriptstyle\frac {5\,{r}^{6}}{384} -\frac {95\,{r}^{8}}{3072}+\frac {575\,{r}^{10}}{18432} -\frac {3755\,{r}^{12}}{221184}+\frac {51755\,{r}^{14}}{8128512}-\frac {779375\,{r}^{16}}{520224768}+\frac {3201125\,{r}^{18}}{18728091648}-\frac{125\,{r}^{20}}{18874368}} \\
 & {\scriptstyle r \in [1,2]}  & {\scriptstyle  t_{5} } &  = {\scriptstyle -\frac{2789680345}{74912366592} +\frac {5005\,{r}^{2}}{73728}-\frac {9747175\,\ln  \left( r \right) }{231211008}-\frac {15\,{r}^{4}}{512}-\frac {155\,{r}^{2}\ln  \left( r \right) }{18432}+\frac {5\,{r}^{4}\ln  \left( r \right) }{128} }
  \end{aligned}
  \right.
\end{equation*}
Finally,  each parameter $\alpha_i$ of (\ref{eq:solDCcylder}) is given by (\ref{alpha_i_Plane}) using the closure function $T_i(R=2)$ and its corresponding eigenvalue $\lambda_i$.

\subsection{Parallel configuration \&  equilibrated case $Q=0$}
\label{Neumann_bc_equilibrated_append}
The theoretical solution detailed in section \ref{Neumann_bc_equilibrated} is hereby detailled.
From (\ref{eq:solNE}) we recall  the  temperature solution
\begin{equation}
  \label{eq:solNE_derder}
   T(r,z)= a G(z)+g(z)aT_0+\sum_{i\in \Z^\star} 
  \alpha_i c_i(z) T_i(r)\, e^{\lambda_i z},
\end{equation}
involving the constant $a$ given in (\ref{ab_parallel_sol}) and the function $g(z)$ given in (\ref{fz}). Rewritting (\ref{fz}) as
\begin{equation*}
  g(z)=H(z)H(1-z)\left( 1-\cos(2\pi (z+\frac{1}{2})) \right),
\end{equation*}
with  $H(z)$ the Heaviside function, and using integration by parts leads to 
a primitive $G(z)=\int_{-\infty}^{z} g(z')dz'$ equals to
\begin{equation*}
  G(z)=H(z)H(1-z)\left(z-\frac{1}{2 \pi}\sin(2\pi (z+\frac{1}{2}))\right).
\end{equation*}
Again,  the functions $c_{i}(z)$  are given by   (\ref{eq:cn-N-def}), the integration by part of which  gives
\begin{equation*}
  c_{+i}(z)= g(z)\frac{e^{-\lambda_{i} z}}{\lambda_i}+\frac{H(1-z)}{\lambda_i^2+4\pi^2}\left(\frac{\lambda}{2\pi}\sin(2\pi z) +\cos(2\pi z) \right)
\end{equation*}
The  eigenfunctions $T_i$ are provided by the $\lambda$-analytical decomposition  (\ref{T_analytical_lambda}) upon functions $t_p(r)$ such that 
\begin{equation}
  \label{T_analytical_lambda_ter}
  T_i(r)  = \sum_{p=0}^{N_p} t_p(r) \lambda_i^p,
\end{equation}
where each eigenvalue $\lambda_i$ of the discrete spectrum sets its eigenfunctions $T_i$
from (\ref{T_analytical_lambda_bis}). We hereby provide the five first elements of these spectrum computed with a finite truncation of $N_p=20$ in (\ref{T_analytical_lambda_bis}) and parameter $Pe=50$, computed with a formal calculus Maple software. $\lambda_1 = -1.738793$, $\lambda_2 = -1.738793$, $\lambda_3 = -1.585275$, $\lambda_4 =-1.3093020$, $\lambda_5 = -1.011529$. Function $t_p(r)$, $p\in \{0,5\}$ are also hereby given by the following piecewise continuous polynomial functions of $r$ along the various solid-fluid domains $[-2,-1] \cup [-1,0] \cup [0,1] \cup[1,2]$. \textcolor{red}{Starting with $t_0=1$ identically equal to 1, we recursively compute the following functions $t_i$ and obtained}
\begin{equation}
  \left\{ 
  \begin{aligned}
 & {\scriptstyle r \in [-2,-1]}   & t_{1}= & 0 \\
 & {\scriptstyle r \in [-1,0]} & t_{1}= &  25\, \left( r-1 \right)  \left( 1+r \right) ^{3} \\
 & {\scriptstyle r \in [0,1]} & t_{1}= &  -25\,{r}^{4}+50\,{r}^{3}-50\,r-25 \\
 & {\scriptstyle r \in [1,2]}  &  t_{1}= & -50
  \end{aligned}
  \right.
\end{equation}
\begin{equation}
  \left\{ 
  \begin{aligned}
  & {\scriptstyle r \in [-2,-1]}  & t_{2}= &-\frac{1}{2}\, \left( r+2 \right) ^{2} \\
  & {\scriptstyle r \in [-1,0]} & t_{2}= & {\frac{1347}{14}}+{\frac {2236\,r}{7}}-\frac{{r}^{2}}{2}-1250\,{r}^{3}
\mbox{}-1875\,{r}^{4}-750\,{r}^{5}+500\,{r}^{6}+{\frac {3750\,{r}^{7}}{7}}+ {\frac {1875\,{r}^{8}}{14}}\\
  & {\scriptstyle r \in [0,1]} & t_{2} = & {\frac{1347}{14}}+{\frac {2236\,r}{7}}-\frac{{r}^{2}}{2}-1250\,{r}^{3}-625\,{r}^{4}+750\,{r}^{5}+500\,{r}^{6}-{\frac {3750\,{r}^{7}}{7}}+ {\frac {1875\,{r}^{8}}{14}}    \\
  & {\scriptstyle r \in [1,2]} &  t_{2}= & 1248-{\frac {13014\,r}{7}}-\frac{{r}^{2}}{2}
  \end{aligned}
  \right.
\end{equation}
\begin{equation}
  \left\{ 
  \begin{aligned}
  & {\scriptscriptstyle r \in [-2,-1] } &  {\scriptstyle t_{3}=} &  {\scriptstyle 0}\\
  & {\scriptscriptstyle r \in [-1,0]}  & {\scriptstyle t_{3}= }& {\scriptstyle\frac{5(1+r)^{3} \left( -11661
         -20261\,r+96921\,{r}^{2}+226961\,{r}^{3}+8750\,{r}^{4}-362250\,{r}^{5}-322000\,{r}^{6}-18500\,{r}^{7}+84375\,{r}^{8}+28125\,{r}^{9}\right)}{462} }\\
    & {\scriptscriptstyle  r \in [0,1]} &   {\scriptstyle  t_{3} =} & {\scriptstyle -\frac{19435}{154}-\frac{19730\,r}{33}+\frac{25 r^2}{2}+\frac{101200{r}^{3}}{21}+\frac{78125{r}^{4}}{14}-\frac{33610{r}^{5}}{7}-\frac{74965{r}^{6}}{6}+\frac{31250{r}^{7}}{7}+\frac{103125{r}^{8}}{14}-\frac {3125{r}^{9}}{3} -\frac{72500{r}^{10}}{21}}\\
      & & & {\scriptstyle +\frac{140625{r}^{11}}{77}-\frac{46875{r}^{12}}{154} }  \\
  &  {\scriptscriptstyle r \in [1,2]} &  {\scriptstyle t_{3}=} &  {\scriptstyle \frac{14580}{11} -100\,r+25\,{r}^{2}}
  \end{aligned}
    \right.
\end{equation}
\begin{equation}
  \left\{ 
  \begin{aligned}
  & {\scriptscriptstyle r \in [-2,-1] } &  {\scriptstyle t_{4}=} &  {\scriptstyle \frac{1}{24}\, \left( r+2 \right) ^{4}}\\
  & {\scriptscriptstyle r \in [-1,0]}  & {\scriptstyle t_{4}= }& {\scriptstyle \frac{21071161}{504504}+\frac {3288262\,r}{9009}-\frac {1347\,{r}^{2}}{28} -\frac {209989\,{r}^{3}}{33}-\frac {33452423\,{r}^{4}}{1848}-\frac {95900\,{r}^{5}}{11}+\frac {2031875\,{r}^{6}}{42}+\frac{15973250\,{r}^{7}}{147}+\frac{2275625\,{r}^{8}}{28}-\frac{2028125\,{r}^{9}}{63}}\\
      & & & {\scriptstyle-\frac{4179325\,{r}^{10}}{36}-\frac{7640625\,{r}^{11}}{77}-\frac{2265625\,{r}^{12}}{66}+\frac{421875\,{r}^{13}}{91}+\frac{60968750\,{r}^{14}}{7007} +\frac{234375\,{r}^{15}}{77}+ \frac{234375\,{r}^{16}}{616} }  \\
    & {\scriptscriptstyle  r \in [0,1]} &   {\scriptstyle  t_{4} =} & {\scriptstyle +\frac{21071161}{504504}+\frac{3288262\,r}{9009}-\frac{1347\,{r}^{2}}{28}-\frac{209989\,{r}^{3}}{33}-\frac{21791423\,{r}^{4}}{1848}+\frac{101400\,{r}^{5}}{11}+\frac{2019625\,{r}^{6}}{42}+\frac{796750\,{r}^{7}}{147}-\frac{10902625\,{r}^{8}}{196}-\frac{2018875\,{r}^{9}}{63} }\\
      & & & {\scriptstyle +\frac{14244725\,{r}^{10}}{252}+\frac{609375\,{r}^{11}}{77}-\frac{8828125\,{r}^{12}}{462}-\frac{421875\,{r}^{13}}{91}+\frac{60968750\,{r}^{14}}{7007}-\frac{234375\,{r}^{15}}{77}+\frac{234375\,r^{16}}{616} }  \\
  &  {\scriptscriptstyle r \in [1,2]} &  {\scriptstyle t_{4}=} &  {\scriptstyle -\frac{22888172}{1617}+\frac {1164110834\,r}{63063} -624\,{r}^{2}+\frac {2169\,{r}^{3}}{7}  }
  \end{aligned}
    \right.
\end{equation}

\begin{equation}
  \left\{ 
  \begin{aligned}
  & {\scriptscriptstyle r \in [-2,-1] } &  {\scriptstyle t_{5}=} &  {\scriptstyle 0}\\
    & {\scriptscriptstyle r \in [-1,0]}  & {\scriptstyle t_{5}= }& {\scriptscriptstyle  \frac {5\, \left( 1+r \right) ^{3}}{162954792}
\left(1594383325 -582590385\,r-978868182\,{r}^{2}+74397416190\,{r}^{3}+111713337741\,{r}^{4}+ 410289881841\,{r}^{5}-1279496305500\,{r}^{6} \right. }\\
    & & & {\scriptscriptstyle -741396373500\,{r}^{7}+1797598889250\,{r}^{8}+3506620554250\,{r}^{9}+1894060853250\,{r}^{10}-1117168905750\,{r}^{11}}  \\
    & & & {\scriptscriptstyle  \left. -2202785812500\,{r}^{12} -1228317562500\,{r}^{13}-156926250000\,{r}^{14}+140323125000\,{r}^{15}+68527265625\,{r}^{16}+ 9789609375\,{r}^{17}  \right)}  \\
    & {\scriptscriptstyle  r \in [0,1]} &   {\scriptstyle  t_{5} =} & {\scriptstyle \frac{7971916625}{162954792}+\frac {3500466325\,r}{27159132}+ \frac {19435\,{r}^{2}}{308}+\frac {50174095\,{r}^{3}}{22932}+\frac{2038131125\,{r}^{4}}{252252} -\frac {25775835\,{r}^{5}}{4004}-\frac {19508095\,{r}^{6}}{308}-\frac {250061555\,{r}^{7}}{6468} }\\
     & & & {\scriptstyle+\frac {132625865\,{r}^{8}}{1176}+\frac {37396250\,{r}^{9}}{231} -\frac {251021875\,{r}^{10}}{1764}-\frac {269196250\,{r}^{11}}{1617}  +\frac {1040385625\,{r}^{12}}{19404}+\frac {170490000\,{r}^{13}}{1001}-\frac {74039375\,{r}^{14}}{924}   }  \\
      & & & {\scriptstyle -\frac{62421875\,{r}^{15}}{1617}+\frac {434609375\,{r}^{16}}{24024}+\frac{7008984375\,{r}^{17}}{476476}-\frac{4114843750\,{r}^{18}}{357357}+\frac{17578125\,{r}^{19}}{5852}-\frac {3515625\,{r}^{20}}{11704} }\\  
  &  {\scriptscriptstyle r \in [1,2]} &  {\scriptstyle t_{5}=} &  {\scriptstyle -\frac{96144058760}{20369349}+\frac {83080\,r}{33} -\frac {7290\,{r}^{2}}{11}+\frac {50\,{r}^{3}}{3}-\frac {25\,{r}^{4}}{12}}
  \end{aligned}
    \right.
  \end{equation}
Parameter $\alpha_i$ of (\ref{eq:solNE_derder}) is given by (\ref{alpha_i_Plane}) using closure function $T_i(R=2)$ and its corresponding eigenvalue $\lambda_i$.
\bibliographystyle{abbrv}
\bibliography{biblio}



\end{document}